\newtheorem{Theorem}{Theorem}[section]
\newtheorem{Lemma}[Theorem]{Lemma}
\newtheorem{Proposition}[Theorem]{Proposition}
\numberwithin{equation}{section}
\def\C {\mathbb C}
\def\R {\mathbb R}
\newcommand{\Spec}{\operatorname{Spec}}
\newcommand{\<}{\langle}
\renewcommand{\>}{\rangle}
\newcommand{\id}{\operatorname{Id}}
\newcommand{\inclusion}{\hookrightarrow}
\newcommand{\p}{\partial}
\newcommand{\Vol}{\operatorname{Vol}}
\renewcommand{\Re}{\operatorname{Re}}
\begin{document}
\title[A note on linear time-harmonic Maxwell equations]{A note on time-harmonic Maxwell equations on Riemannian manifolds}

\author[Yernat M. Assylbekov]{Yernat M. Assylbekov}
\address{Department of Computational and Applied Mathematics, Rice University, Houston, TX 77005, USA}
\email{yernat.assylbekov@gmail.com}

\maketitle

\begin{abstract}
In this paper we consider boundary value problems in electromagnetism. We prove well-posedness results for the time-harmonic Maxwell equations in the setting of Riemannian manifolds. We also consider the eigenvalue problem the homogeneous time-harmonic Maxwell equations with zero boundary conditions.
\end{abstract}


\section{Introduction}\label{section::introduction}

\settocdepth{section}

In the current note, which serves as the author's personal reference, we consider boundary value problems in electromagnetism.  We prove well-posedness results for the time-harmonic Maxwell equations in the setting of Riemannian manifolds. Some of these results are well known for bounded domains in $\R^3$; see classical references \cite{KirschHettlich2015,Monk2003}. To the best of author's knowledge, there are few related literatures on Riemannian geometries \cite{KenigSaloUhlmann2011a,MitreaMitrea2002,Mitrea2004,Mitrea2001}. However, these results assume that either electromagnetic parameters being constantly one or too regular.\smallskip

Let $(M,g)$ be a compact $3$-dimensional Riemannian manifold with smooth boundary. By $d$ and $*$ we denote the exterior derivative and the Hodge star operator on $(M,g)$, respectively. Consider the time-harmonic Maxwell equations for complex $1$-forms $E$ and $H$
\begin{equation}\label{eqn::Maxwell homogenous in appendix}
\begin{cases}
* dE=i\omega\mu H,\\
* dH=-i\omega \varepsilon E,
\end{cases}
\end{equation}
where $\omega>0$ is a fixed frequency. The complex functions $\mu$ and $\varepsilon$ represent the material parameters (permettivity and permeability, respectively). We assume that $\varepsilon,\mu$ are in $ L^\infty(M)$ and satisfy
\begin{equation}\label{eqn::uniform ellipticity}
\Re\varepsilon, \Re\mu\ge c
\end{equation}
for some constant $c>0$.\smallskip

Let $\imath:\p M\inclusion M$ be the canonical inclusion. Then we introduce tangential trace of $m$-forms by
$$
\mathbf{t}:C^\infty\Omega^{m}(M)\to C^\infty\Omega^{m}(\p M),\quad \mathbf{t}(w)=\imath^*(w),\quad w\in C^\infty\Omega^{m}(M).
$$
We work with the following Hilbert space which is the largest domain of $d$ acting on $m$-forms:
$$
H_d\Omega^{m}(M):=\{w\in L^2\Omega^{m}(M):dw\in L^2\Omega^{m+1}( M)\}
$$
endowed with the inner product
$$
(w_1|w_2)_{H_{d}\Omega^m(M)}:=(w_1|w_2)_{L^2\Omega^{m}(M)}+(dw_1|dw_2)_{L^2\Omega^{m+1}(M)}
$$
and the corresponding norm $\|w\|_{H_{d}\Omega^{m}(M)}^2:=(w|w)_{H_{d}\Omega^m(M)}$. Then the tangential trace operator has its extensions to bounded operators $\mathbf{t}:H_d\Omega^{m}(M)\to H^{-1/2}\Omega^{m}(\p M)$ and $\mathbf{t}:H^1\Omega^{m}(M)\to H^{1/2}\Omega^{m}(\p M)$. In fact, $\mathbf{t}$ is bounded from $H_d\Omega^{m}(M)$ into
$$
TH_d\Omega^{m}(\p M):=\{\mathbf{t}(w):w\in H_d\Omega^{m}(M)\}
$$
with the topology defined by the norm
$$
\|f\|_{TH_d\Omega^{m}(\p M)}:=\inf\{\|w\|_{H_d\Omega^{m}(M)}:\mathbf{t}(w)=f,\,w\in H_d\Omega^{m}(M)\}.
$$
We refer the reader to Section~\ref{section::on H_d and H_delta spaces} for more details. Let us now state our main results.\smallskip

For a given $f\in TH_d\Omega^1(\p M)$, we consider the time-harmonic Maxwell equations \eqref{eqn::Maxwell homogenous in appendix} with the tangential boundary condition $\mathbf{t}(E)=f$, where $\omega\in\C$ is fixed.\smallskip

The following theorem is the first main result of this paper.

\begin{Theorem}\label{thm::well posedness new version homogeneous}
Let $(M,g)$ be a compact $3$-dimensional Riemannian manifold with smooth boundary. Suppose $\varepsilon,\mu\in L^\infty(M)$ satisfy \eqref{eqn::uniform ellipticity}. There is a discrete subset $\Sigma$ of $\C$ such that for all $\omega\notin \Sigma$ and for a given $f\in TH_d\Omega^1(\p M)$ the Maxwell equation \eqref{eqn::Maxwell homogenous in appendix} with $\mathbf t(E)=f$ has a unique solution $(E,H)\in H_{d}\Omega^1(M)\times H_{d}\Omega^1(M)$ satisfying
$$
\|E\|_{H_{d}\Omega^1(M)}+\|H\|_{H_{d}\Omega^1(M)}\le C\|f\|_{TH_d\Omega^1(\p M)}
$$
for some constant $C>0$ independent of $f$.
\end{Theorem}

To prove Theorem~\ref{thm::well posedness new version homogeneous}, we consider the following non-homogeneous problem. Let $J_e$ and $J_m$ be $1$-forms on $M$ representing current sources. We consider the non-homogenous time-harmonic Maxwell equations
\begin{equation}\label{eqn::Maxwell in appendix}
\begin{cases}
* dE=i\omega\mu H+J_m,\\
* dH=-i\omega \varepsilon E+J_e
\end{cases}
\end{equation}
We also work with the space of differential forms in $H_{d}\Omega^1(M)$ with zero tangential traces
$$
H_{d,0}\Omega^1(M):=\{w\in H_{d}\Omega^1(M):\mathbf{t}(w)=0\}.
$$
Our second main result is as follows.

\begin{Theorem}\label{thm::well posedness new version}
Let $(M,g)$ be a compact $3$-dimensional Riemannian manifold with smooth boundary. Suppose $\varepsilon,\mu\in L^\infty(M)$ satisfy \eqref{eqn::uniform ellipticity} and $J_e,J_m\in L^2\Omega^1(M)$. There is a discrete subset $\Sigma$ of $\C$ such that for all $\omega\notin \Sigma$ the Maxwell's system \eqref{eqn::Maxwell in appendix} has a unique solution $(E,H)\in H_{d,0}\Omega^1(M)\times H_{d}\Omega^1(M)$ satisfying
$$
\|E\|_{H_{d}\Omega^1(M)}+\|H\|_{H_{d}\Omega^1(M)}\le C(\|J_e\|_{L^2\Omega^1(M)}+\|J_m\|_{L^2\Omega^1(M)})
$$
for some constant $C>0$ independent of $J_e$ and $J_m$.
\end{Theorem}


Finally, we also consider the eigenvalue problem for the boundary value problem
\begin{equation}\label{eqn::Maxwell homogeneous}
\begin{cases}
*dE=i\omega\mu H,\\
*dH=-i\omega \varepsilon E
\end{cases}
\quad\text{and}\quad \mathbf t(E)=0
\end{equation}
under the additional assumption that both $\varepsilon$ and $\mu$ are real-valued.

\begin{Theorem}\label{thm::spectral theorem for Maxwell equation homogeneous}
Let $(M,g)$ be a compact $3$-dimensional Riemannian manifold with smooth boundary and let $\varepsilon,\mu\in L^\infty(M)$ be real-valued and satisfying \eqref{eqn::uniform ellipticity}. There is a sequence of positive numbers $\{\omega_k\}_{k=1}^\infty$ and the corresponding sequence
$$
\{(e_k,h_k)\}_{k=1}^\infty\in H_{d,0}\Omega^1(M)_{0,\varepsilon}\times H_{d}\Omega^1(M)_\mu
$$
satisfying
\begin{equation}\label{eqn::Maxwell homogenous for eigenvalues and eigenfunctions}
\begin{cases}
* de_k=i\omega_k\mu h_k,\\
* dh_k=-i\omega_k \varepsilon e_k.
\end{cases}
\end{equation}
The eigenvalues $\omega_k>0$ have finite multiplicity, $0<\omega_1\le\omega_2\le \dots\to\infty$ as $k\to\infty$. The set $\{e_k\}_{k=1}^\infty$ forms an orthonormal basis in $H_{d,0}\Omega^1(M)_{0,\varepsilon}$ with respect to the inner product $(\cdot,\cdot)_{L^2_\varepsilon\Omega^1(M)}:=(\varepsilon\cdot|\cdot)_{L^2\Omega^1(M)}$ and the set $\{h_k\}_{k=1}^\infty$ forms a basis in $H_d\Omega^1(M)_\mu$ which is orthonormal with respect to the inner products $(\cdot,\cdot)_{L^2_\mu\Omega^1(M)}:=(\mu\cdot|\cdot)_{L^2\Omega^1(M)}$. Moreover, $\omega=0$ is an eigenvalue as well with infinite dimensional eigenspace $H_{d,0}(0,\Omega^1(M))\times H_{d}(0,\Omega^1(M))$, where $H_{d,0}(0,\Omega^1(M)):=H_{d,0}\Omega^1(M)\cap H_{d}(0,\Omega^1(M))$.
\end{Theorem}

These results were obtained for bounded domains in the Euclidean space $\R^3$ assuming $J_m=0$; see \cite{KirschHettlich2015,Monk2003} and references therein. On Riemannian manifolds, Theorem~\ref{thm::well posedness new version homogeneous} was proven in \cite{KenigSaloUhlmann2011a} under the assumption that $\varepsilon,\mu\in C^k(M)$, $k\ge 2$.\smallskip

We would also like to remark that our results are stated for the case when $\varepsilon$ and $\mu$ are independent of the frequency $\omega$. However, there are many applications where the electromagnetic parameters depend on $\omega$. For instance, in lossy materials, $\varepsilon=\varepsilon_0+i\sigma/\omega$ and $\mu>0$ where $\varepsilon_0>0$ and $\sigma\ge 0$. The methods used in the present paper can be extended to this particular case as in \cite{KirschHettlich2015,Monk2003}.

\subsection*{Structure of the paper} The paper is organized as follows. In Section~\ref{section::preliminaries} we briefly present basic facts on differential forms and trace operators. Then in Section~\ref{section::on H_d and H_delta spaces}  we show that the trace operators can be extended to $H_d\Omega^m(M)$ and to the closely related space $H_\delta\Omega^m(M)$. In that section we study some other important properties of those spaces. Next, in Section~\ref{section::well-posedness} we prove appropriated  Helmholtz decompositions for $1$-forms and certain compact embedding results. Sections~\ref{section::proof of thm2}, \ref{section::proof of thm1} and \ref{section::spectral problem} are devoted to proofs of main results.


\section{Preliminaries}\label{section::preliminaries}

In this section we briefly present basic facts on differential forms and trace operators. For more detailed exposition we refer the reader to the manuscript of Schwarz \cite{Schwarz1995}.\smallskip

Let $(M,g)$ be a compact oriented $n$-dimensional Riemannian manifold with smooth boundary. The inner product of tangent vectors with respect to the metric $g$ is denoted by $\<\cdot,\cdot\>_g$, and $|\cdot|_g$ is the notation for the corresponding norm. By $|g|$ we denote the determinant of $g=(g_{ij})$ and $(g^{ij})$ is the inverse matrix of $(g_{ij})$. Finally, there is the induced metric $\imath^*g$ on $\p M$ which gives a rise to the inner product $\<\cdot,\cdot\>_{\imath^*g}$ of vectors tangent to $\p M$.

\subsection{Basic notations for differential forms}\label{sec::2.1}
In what follows, for $F$ some function space ($C^k$, $L^p$, $H^k$, etc.), we denote by $F\Omega^m(M)$ the corresponding space of $m$-forms. In particular, the space of smooth $m$-forms is denoted by $C^\infty\Omega^{m}(M)$. Let $*:C^\infty\Omega^{m}(M)\to C^\infty\Omega^{n-m}(M)$ be the Hodge star operator. For real valued $\eta,\zeta\in C^\infty\Omega^{m}(M)$, the inner product with respect to $g$ is defined in local coordinates as
$$
\<\eta,\zeta\>_g=*(\eta\wedge*\zeta)=g^{i_1 j_1}\cdots g^{i_m j_m}\eta_{i_1\dots i_m}\zeta_{j_1\dots j_m}.
$$
This can be extended as a bilinear form on complex valued forms. We also write $|\eta|_g^2=\<\eta,\overline\eta\>_g$. The inner product on $L^2\Omega^{m}(M)$ is defined as
$$
(\eta|\zeta)_{L^2\Omega^{m}(M)}=\int_M\<\eta,\overline{\zeta}\>_g\,d\Vol_g=\int_M\eta\wedge*\overline{\zeta},\qquad \eta,\zeta\in L^2\Omega^m(M),
$$
where $d\Vol_g=* 1=|g|^{1/2}\,dx^1\wedge\cdots\wedge\,dx^n$ is the volume form. The corresponding norm is $\|\cdot\|_{L^2\Omega^{m}(M)}^2=(\cdot|\cdot)_{L^2\Omega^{m}(M)}$. Using the definition of the Hodge star operator $*$, it is not difficult to check that
\begin{equation}\label{eqn::Hodge star is isometry in L^2}
(\eta|\zeta)_{L^2\Omega^{m}(M)}=(*\eta|*\zeta)_{L^2\Omega^{n-m}(M)}.
\end{equation}
Let $d:C^\infty\Omega^{m}(M)\to C^\infty\Omega^{m+1}(M)$ be the external differential. Then the codifferential $\delta:C^\infty\Omega^{m}(M)\to C^\infty\Omega^{m-1}(M)$ is defined as
$$
(d\eta|\zeta)_{L^2\Omega^{m}(M)}=(\eta|\delta\zeta)_{L^2\Omega^{m-1}(M)}
$$
for all  $\eta\in C^\infty_0\Omega^{m-1}(M^{\rm int})$, $\zeta\in C^\infty\Omega^{m}(M)$. The Hodge star operator $*$ and the codifferential $\delta$ have the following properties when acting on $C^\infty\Omega^{m}(M)$:
\begin{equation}\label{eqn::delta in terms of * and d}
*^2=(-1)^{m(n-m)},\quad \delta=(-1)^{m(n-m)-n+m-1}*(d*\cdot).
\end{equation}

For a given $\xi\in C^\infty\Omega^1(M)$, the interior product $i_\xi:C^\infty\Omega^{m}(M)\to C^\infty\Omega^{m-1}(M)$ is the contraction of differential forms by $\xi$. In local coordinates,
$$
i_\xi \eta=g^{ij}\xi_i\,\eta_{ji_1\dots i_{m-1}},\quad \eta\in C^\infty\Omega^{m}(M).
$$
The interior product acts on exterior products in the following way
\begin{equation}\label{eqn::contraction on wedge}
i_\xi(\eta\wedge\zeta)=i_\xi\eta\wedge \zeta+(-1)^m \eta\wedge i_\xi\zeta,\quad\eta\in C^\infty\Omega^{m}(M),\,\zeta\in C^\infty\Omega^{k}(M).
\end{equation}
It is the formal adjoint of $\xi$, in the inner product $\<\cdot,\cdot\>_g$ on real valued forms, and has the following expression
\begin{equation}\label{eqn::expression for i_X}
i_\xi\eta=(-1)^{n(m-1)}*(\xi\wedge*\eta),\quad\eta\in C^\infty\Omega^{m}(M).
\end{equation}
Using this, one can also show that
\begin{equation}\label{eqn::divergence of fw}
\delta(fw)=f\delta w-i_{df}w,\quad f\in C^\infty(M),\quad w\in C^\infty\Omega^m(M).
\end{equation}

The Hodge Laplacian acting on $\Omega^{m}(M)$ is defined by $-\Delta=d\delta+\delta d$.\smallskip

Finally, the inner product on $L^2\Omega^m(\p M)$ is given by
$$
(u|v)_{L^2\Omega^{m}(\p M)}=\int_{\p M}\<u,\overline v\>_{\imath^* g}\,d\sigma_{\p M},\qquad u,v\in L^2\Omega^m(\p M),
$$
where $\<\cdot,\cdot\>_{\imath^*g}$ is extended as a bilinear form on complex forms on $\p M$, and $d\sigma_{\p M}=\imath^*(i_\nu d\Vol_g)$ is the volume form on $\p M$ induced by $d\Vol_g$.

\subsection{The normal and parallel parts of differential forms}
The outward unit normal $\nu$ to $\p M$ can be extended to a vector field near $\p M$ by parallel transport along normal geodesics (initiating from $\p M$ in the direction of $-\nu$), and then to a vector field on $M$ via a cutoff function. For $w\in C^\infty\Omega^m(M)$, we introduce
$$
\eta_\perp=\nu\wedge i_\nu \eta,\quad \eta_\parallel=\eta-\eta_\perp.
$$
Using \eqref{eqn::contraction on wedge}, one can see that $i_\nu \eta_\perp=i_\nu \eta$, so $i_\nu \eta_\parallel=0$. Since $\mathbf{t}(\nu)=0$, we also have $\mathbf{t}(\eta_\perp)=0$, so $\mathbf{t}(\eta)=\mathbf{t}(\eta_\parallel)$. It is clear that $\nu\wedge \eta_\perp=0$.

\subsection{Integration by parts}
Let us first prove the following simple result which will be used in formulating integration by parts formula in appropriate way.

\begin{Lemma}\label{lemma::expression for boundary term of Stokes' theorem}
If $\eta\in C^\infty\Omega^m(M)$ and $\zeta\in C^\infty\Omega^{m+1}(M)$, then for an open subset $\Gamma\subset \p M$ the following holds
$$
(\mathbf{t}(\eta)|\mathbf{t}(i_\nu  \zeta))_{L^2\Omega^{m}(\Gamma)}=\int_{\Gamma}\mathbf{t}(\eta\wedge*\overline\zeta).
$$
\end{Lemma}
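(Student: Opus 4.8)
The plan is to reduce the identity to a pointwise computation on $\Gamma$ together with the definition of the surface inner product. First I would observe that both sides of the claimed equality are integrals over $\Gamma$ of $m$-forms on $\partial M$, so it suffices to show that the integrands agree: namely that
\begin{equation*}
\mathbf{t}(\eta\wedge*\overline\zeta)=\langle \mathbf{t}(\eta),\overline{\mathbf{t}(i_\nu\zeta)}\rangle_{\imath^*g}\,d\sigma_{\partial M}
\end{equation*}
as $n$-forms on $\Gamma$ (here $n-1=\dim\partial M$, and an $m$-form wedged with an $(n-m)$-form is a top form). Since $\mathbf{t}=\imath^*$ is a ring homomorphism on the exterior algebra, $\mathbf{t}(\eta\wedge*\overline\zeta)=\mathbf{t}(\eta)\wedge\mathbf{t}(*\overline\zeta)$, so the whole matter comes down to identifying $\mathbf{t}(*\overline\zeta)$ in terms of the boundary Hodge star and the tangential trace of $i_\nu\zeta$.

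The key step is the well-known relation between the ambient Hodge star and the boundary Hodge star under pullback: if $*_{\partial}$ denotes the Hodge star operator on $(\partial M,\imath^*g)$, then for any $(m+1)$-form $\zeta$ on $M$ one has
\begin{equation*}
\imath^*(*\zeta)=(-1)^{?}\,*_{\partial}\,\mathbf{t}(i_\nu\zeta),
\end{equation*}
with a sign depending on $m$ and $n$; this is a local statement which I would verify by choosing, near a boundary point, coordinates in which $\nu=\partial_n$ and $g$ is in the standard boundary-adapted form, so that $*$ and $i_\nu$ both have explicit coordinate expressions. Granting this, I would then use the defining property of $*_\partial$, namely $u\wedge *_\partial v=\langle u,v\rangle_{\imath^*g}\,d\sigma_{\partial M}$ for $m$-forms $u,v$ on $\partial M$ (here $d\sigma_{\partial M}=\imath^*(i_\nu d\Vol_g)$ is exactly the Riemannian volume form of $\imath^*g$, which is the standard normalization), applied with $u=\mathbf{t}(\eta)$ and $v=\overline{\mathbf{t}(i_\nu\zeta)}$. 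Combining the two gives the integrand identity, and integrating over $\Gamma$ yields the Lemma. One must also check that all the signs conspire to give $+1$ in the final formula; this is a matter of tracking $(-1)^{m(n-m)}$-type factors from \eqref{eqn::delta in terms of * and d} and \eqref{eqn::expression for i_X} through the boundary restriction, and it is routine though slightly delicate.

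The main obstacle I anticipate is precisely the sign/normalization bookkeeping: getting the relation between $\imath^*\circ *$ on $M$ and $*_\partial\circ\mathbf{t}\circ i_\nu$ with the correct sign, and confirming that the induced volume form $d\sigma_{\partial M}$ used in the definition of $(\cdot|\cdot)_{L^2\Omega^m(\partial M)}$ is the one compatible with the $*_\partial$ appearing there. A clean way to sidestep part of this is to prove the integrand identity at a single point using an orthonormal coframe $e^1,\dots,e^{n}$ adapted to the boundary with $e^n=\nu^\flat$: expand $\eta$ and $\zeta$ in this coframe, compute $\eta\wedge*\overline\zeta$ directly, discard all terms containing $e^n$ after applying $\imath^*$ (since $\imath^* e^n=0$), and compare with the explicit expansion of $\langle\mathbf{t}(\eta),\overline{\mathbf{t}(i_\nu\zeta)}\rangle_{\imath^*g}$ in the same frame. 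Only the components of $\eta$ tangential to $\partial M$ and the components of $\zeta$ of the form (tangential)$\,\wedge e^n$ survive, and a short combinatorial check shows the two expressions coincide. This frame computation is the heart of the proof; everything else is formal.
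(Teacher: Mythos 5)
Your proposal is correct, but it takes a genuinely different route from the paper. The paper never introduces the boundary Hodge star $*_\partial$: it proves the integrand identity $\mathbf{t}(\eta\wedge*\overline\zeta)=\<\mathbf{t}(\eta),\mathbf{t}(i_\nu\zeta)\>_{\imath^*g}\,d\sigma_{\p M}$ invariantly in two steps. First, using the pointwise adjointness $\<\nu\wedge\eta,\zeta\>_g=\<\eta,i_\nu\zeta\>_g$ and the identity $d\sigma_{\p M}=\mathbf{t}(i_\nu d\Vol_g)$, it writes $\<\eta,i_\nu\zeta\>_g\,d\sigma_{\p M}=\mathbf{t}\bigl(i_\nu((\nu\wedge\eta)\wedge*\overline\zeta)\bigr)$ and then applies the derivation property \eqref{eqn::contraction on wedge} of $i_\xi$ together with $\mathbf{t}(\nu)=0$ to collapse this to $\mathbf{t}(\eta\wedge*\overline\zeta)$ with no sign to track. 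Second, it observes that $i_\nu\zeta$ has vanishing normal part, so $\<\eta,i_\nu\zeta\>_g=\<\eta_\parallel,(i_\nu\zeta)_\parallel\>_g=\<\mathbf{t}(\eta),\mathbf{t}(i_\nu\zeta)\>_{\imath^*g}$ on $\p M$. Your route instead passes through the relation $\imath^*(*\zeta)=\pm\,*_\partial\,\mathbf{t}(i_\nu\zeta)$ and the defining property of $*_\partial$, which is a perfectly standard and valid argument, but it is exactly where the sign and orientation bookkeeping you flag as the main obstacle lives; your fallback of verifying the integrand identity in an adapted orthonormal coframe with $e^n=\nu^\flat$ would settle it. What the paper's argument buys is that the potentially delicate combinatorics are absorbed into the already-recorded identities \eqref{eqn::contraction on wedge} and $d\sigma_{\p M}=\imath^*(i_\nu d\Vol_g)$, so no frame computation or sign chase is needed; what your argument buys is that it isolates the reusable general fact relating the ambient and boundary Hodge stars. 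Either way the statement follows by integrating the pointwise identity over $\Gamma$, so there is no gap in your plan, only work left undone in the sign verification.
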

\begin{proof}
First, we show that $\<\eta,i_\nu \zeta\>_g\,d\sigma_{\p M}=\mathbf{t}(\eta\wedge*\overline\zeta)$. Since $\<\nu\wedge\eta,\zeta\>_g=\<\eta,i_\nu \zeta\>_g$, we have
$$
\<\eta,i_\nu \zeta\>_g\,d\sigma_{\p M}=\<\nu\wedge\eta,\zeta\>_g\,d\sigma_{\p M}=\<\nu\wedge\eta,\zeta\>_g\mathbf{t}(i_\nu d\Vol_g)=\mathbf{t}(i_\nu((\nu\wedge\eta)\wedge*\overline\zeta)).
$$
Using \eqref{eqn::contraction on wedge} and $\mathbf{t}(\nu)=0$, this gives
$$
\<\eta,i_\nu \zeta\>_g\,d\sigma_{\p M}=\mathbf{t}(\eta\wedge*\overline\zeta)-\mathbf{t}(\nu)\wedge\mathbf{t}(i_\nu(\eta\wedge*\overline\zeta))=\mathbf{t}(\eta\wedge*\overline\zeta).
$$
Next, we show that $\<\eta,i_\nu \zeta\>_g=\<\mathbf{t}(\eta),\mathbf{t}(i_\nu \zeta)\>_{\imath^*g}$ on $\p M$. Indeed, observe that $(i_\nu \zeta)_\perp=0$. Therefore, $i_\nu \zeta=(i_\nu \zeta)_\parallel$ and hence on $\p M$ we get
$$
\<\eta,i_\nu \zeta\>_g=\<\eta,(i_\nu \zeta)_\parallel\>_g=\<\eta_\parallel,(i_\nu \zeta)_\parallel\>_g=\<\mathbf{t}(\eta),\mathbf{t}(i_\nu \zeta)\>_g=\<\mathbf{t}(\eta),\mathbf{t}(i_\nu \zeta)\>_{\imath^*g}.
$$
Collecting all these, we get $\<\mathbf{t}(\eta),\mathbf{t}(i_\nu \zeta)\>_{\imath^*g}\,d\sigma_{\p M}=\mathbf{t}(\eta\wedge*\overline\zeta)$. Finally, integrating over $\Gamma\subset\p M$ we get the result.
\end{proof}

For $\eta\in C^\infty\Omega^{m}(M)$ and $\zeta\in C^\infty\Omega^{m+1}(M)$, using Stokes' theorem, Lemma~\ref{lemma::expression for boundary term of Stokes' theorem} (with $\Gamma=\p M$) and \eqref{eqn::delta in terms of * and d}, we have the following integration by parts formula for $d$ and $\delta$
\begin{equation}\label{eqn::integration by parts for d and delta}
(\mathbf{t}(\eta)|\mathbf{t}(i_\nu \zeta))_{L^2\Omega^{m}(\p M)}=(d\eta|\zeta)_{L^2\Omega^{m+1}(M)}-(\eta|\delta\zeta)_{L^2\Omega^{m}(M)}.
\end{equation}

\subsection{Extensions of trace operators}\label{section::extensions of t and t i_nu}
The tangential trace operator $\mathbf{t}$ has an extension to a bounded operator from $H^1\Omega^{m}(M)$ to $H^{1/2}\Omega^{m}(\p M)$. Moreover, for every $f\in H^{1/2}\Omega^{m}(\p M)$, there is $u\in H^1\Omega^{m}(M)$ such that $\mathbf{t}(u)=f$ and
$$
\|u\|_{H^1\Omega^{m}(M)}\le C \|f\|_{H^{1/2}\Omega^{m}(\p M)};
$$
see \cite[Theorem~1.3.7]{Schwarz1995} and comments.\smallskip

Next, the operator $\mathbf{t}(i_\nu\,\cdot\,)$ is bounded from $H^1\Omega^{m}(M)$ to $H^{1/2}\Omega^{m-1}(\p M)$. Moreover, for every $h\in H^{1/2}\Omega^{m-1}(\p M)$, there is $\zeta\in H^1\Omega^{m}(M)$ such that $\mathbf{t}(i_\nu\zeta)=h$ and
$$
\|\zeta\|_{H^1\Omega^{m}(M)}\le C \|h\|_{H^{1/2}\Omega^{m-1}(\p M)}.
$$
In fact, we can take $\zeta=\nu\wedge w$, where $w\in H^1\Omega^{m-1}(M)$ such that $\mathbf{t}(w)=h$ and $\|w\|_{H^1\Omega^{m-1}(M)}\le C\|h\|_{H^{1/2}\Omega^{m-1}(\p M)}$.\smallskip

Finally, if $f\in H^{1/2}\Omega^m(\p M)$ and $h\in H^{1/2}\Omega^{m-1}(\p M)$, there is $\xi\in H^1\Omega^m(M)$ such that $\mathbf t(\xi)=f$, $\mathbf t(i_\nu \xi)=h$ and 
$$
\|\xi\|_{H^1\Omega^m(M)}\le C \|f\|_{H^{1/2}\Omega^{m}(\p M)}+C \|h\|_{H^{1/2}\Omega^{m-1}(\p M)}.
$$
This time, we can take $\xi=u_\parallel+\zeta_\perp$, where $u\in H^1\Omega^{m}(M)$ such that $\mathbf{t}(u)=f$ and $\|u\|_{H^1\Omega^{m}(M)}\le C \|f\|_{H^{1/2}\Omega^{m}(\p M)}$ and $\zeta\in H^1\Omega^{m}(M)$ such that $\mathbf{t}(i_\nu\zeta)=h$ and $\|\zeta\|_{H^1\Omega^{m}(M)}\le C \|h\|_{H^{1/2}\Omega^{m-1}(\p M)}$.




\section{Properties of $H_d\Omega^m(M)$ and $H_\delta\Omega^m(M)$ spaces}\label{section::on H_d and H_delta spaces}
Let $(M,g)$ be a compact oriented $n$-dimensional Riemannian manifold with smooth boundary. In this paper we work with the Hilbert spaces $H_d\Omega^{m}(M)$ and $H_\delta\Omega^{m}(M)$ which are the largest domains of $d$ and $\delta$, respectively, acting on $m$-forms:
\begin{align*}
H_d\Omega^{m}(M)&:=\{w\in L^2\Omega^{m}(M):dw\in L^2\Omega^{m+1}(M)\},\\
H_\delta\Omega^{m}(M)&:=\{u\in L^2\Omega^{m}(M):\delta u\in L^2\Omega^{m-1}( M)\}
\end{align*}
endowed with the inner products
\begin{align*}
(w_1|w_2)_{H_{d}\Omega^m(M)}&:=(w_1|w_2)_{L^2\Omega^{m}(M)}+(dw_1|dw_2)_{L^2\Omega^{m+1}(M)},\\
(u_1|u_2)_{H_{\delta}\Omega^m(M)}&:=(u_1|u_2)_{L^2\Omega^{m}(M)}+(\delta u_1|\delta u_2)_{L^2\Omega^{m-1}(M)}
\end{align*}
and the corresponding norms
$$
\|w\|_{H_{d}\Omega^{m}(M)}^2:=(w|w)_{H_{d}\Omega^m(M)},\quad \|u\|_{H_{\delta}\Omega^{m}(M)}^2:=(u|u)_{H_{\delta}\Omega^m(M)}.
$$
In the present section we prove some important properties of these spaces.

\subsection{Trace operators} 
In this subsection we show that there are bounded extensions $\mathbf{t}:H_d\Omega^{m}(M)\to H^{-1/2}\Omega^{m}(\p M)$ and $\mathbf{t}(i_\nu\,\cdot\,):H_\delta\Omega^{m+1}(M)\to H^{-1/2}\Omega^{m}(\p M)$.\smallskip

Let $(\cdot|\cdot)_{\p M}$ be the distributional duality on $\p M$ naturally extending $(\cdot|\cdot)_{L^2\Omega^m(\p M)}$

\begin{Proposition}\label{prop::tangential trace operator is extended to H_d}
{\rm (a)} The operator $\mathbf{t}:H^1\Omega^m(M)\to H^{1/2}\Omega^m(\p M)$ has its extension to a bounded operator $\mathbf{t}:H_d\Omega^m(M)\to H^{-1/2}\Omega^m(\p M)$ and the following integration by parts formula holds
$$
(\mathbf{t}(\eta)|\mathbf{t}(i_\nu \zeta))_{\p M}=(d\eta|\zeta)_{L^2\Omega^{m+1}(M)}-(\eta|\delta\zeta)_{L^2\Omega^{m}(M)}
$$
for all $\eta\in H_d\Omega^m(M)$ and $\zeta\in H^1\Omega^{m+1}(M)$\smallskip

{\rm (b)} The operator $\mathbf{t}(i_\nu\,\cdot\,):H^1\Omega^{m+1}(M)\to H^{1/2}\Omega^{m}(\p M)$ has its extension to a bounded operator $\mathbf{t}(i_\nu\,\cdot\,):H_\delta\Omega^{m+1}(M)\to H^{-1/2}\Omega^{m}(\p M)$ and the following integration by parts formula holds
$$
(\mathbf{t}(i_\nu \zeta)|\mathbf{t}(\eta))_{\p M}=(\zeta|d\eta)_{L^2\Omega^{m+1}(M)}-(\delta \zeta|\eta)_{L^2\Omega^{m}(M)}
$$
for all $\zeta\in H_\delta\Omega^{m+1}(M)$ and $\eta\in H^1\Omega^m(M)$.
\end{Proposition}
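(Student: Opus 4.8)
The plan is to prove both parts by a duality argument built on the smooth integration by parts formula \eqref{eqn::integration by parts for d and delta} together with the bounded right inverses of the classical trace operators recorded in Section~\ref{section::extensions of t and t i_nu}. I will give the details for (a); part (b) follows by the symmetric argument, exchanging the roles of $d$ and $\delta$, or alternatively from (a) by applying the Hodge star, since $*$ is an $L^2$-isometry by \eqref{eqn::Hodge star is isometry in L^2} and $\delta=\pm{*}d{*}$ by \eqref{eqn::delta in terms of * and d}, so $\zeta\in H_\delta\Omega^{m+1}(M)$ if and only if $*\zeta\in H_d\Omega^{n-m-1}(M)$.

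First I would record that $C^\infty\Omega^m(M)$ is dense in $H_d\Omega^m(M)$, and likewise in $H_\delta\Omega^m(M)$. This is standard: cover a neighborhood of $\p M$ by boundary coordinate charts, flatten the boundary, extend each localized piece across the flattened boundary, mollify, and glue with a partition of unity; the upshot is that any $\eta\in H_d\Omega^m(M)$ admits $\eta_j\in C^\infty\Omega^m(M)$ with $\eta_j\to\eta$ in $L^2\Omega^m(M)$ and $d\eta_j\to d\eta$ in $L^2\Omega^{m+1}(M)$. I would also note that, by density of smooth forms in $H^1$ and boundedness of $\mathbf t\colon H^1\to H^{1/2}$ and $\mathbf t(i_\nu\,\cdot\,)\colon H^1\to H^{1/2}$, formula \eqref{eqn::integration by parts for d and delta} remains valid for $\eta\in H^1\Omega^m(M)$ and $\zeta\in H^1\Omega^{m+1}(M)$.

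Now fix $\eta\in H_d\Omega^m(M)$ and pick smooth $\eta_j\to\eta$ in $H_d\Omega^m(M)$. Given $h\in H^{1/2}\Omega^m(\p M)$, choose by Section~\ref{section::extensions of t and t i_nu} a form $\zeta\in H^1\Omega^{m+1}(M)$ with $\mathbf t(i_\nu\zeta)=h$ and $\|\zeta\|_{H^1\Omega^{m+1}(M)}\le C\|h\|_{H^{1/2}\Omega^m(\p M)}$. Applying \eqref{eqn::integration by parts for d and delta} to the smooth forms $\eta_j-\eta_k$ and $\zeta$ gives
$$
\bigl|(\mathbf t(\eta_j)-\mathbf t(\eta_k)|h)_{L^2\Omega^m(\p M)}\bigr|=\bigl|(d(\eta_j-\eta_k)|\zeta)_{L^2\Omega^{m+1}(M)}-(\eta_j-\eta_k|\delta\zeta)_{L^2\Omega^m(M)}\bigr|\le C\|\eta_j-\eta_k\|_{H_d\Omega^m(M)}\,\|h\|_{H^{1/2}\Omega^m(\p M)}.
$$
Taking the supremum over $h$ in the unit ball of $H^{1/2}\Omega^m(\p M)$ — using that every such $h$ is $\mathbf t(i_\nu\zeta)$ for some $\zeta$ in a fixed ball of $H^1\Omega^{m+1}(M)$ — we obtain $\|\mathbf t(\eta_j)-\mathbf t(\eta_k)\|_{H^{-1/2}\Omega^m(\p M)}\le C\|\eta_j-\eta_k\|_{H_d\Omega^m(M)}$, so $(\mathbf t(\eta_j))_j$ is Cauchy in $H^{-1/2}\Omega^m(\p M)$. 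I define $\mathbf t(\eta)$ to be its limit; a routine check shows this is independent of the approximating sequence, agrees with the classical tangential trace on $H^1\Omega^m(M)$ (using $H^1\hookrightarrow H_d$), and satisfies $\|\mathbf t(\eta)\|_{H^{-1/2}\Omega^m(\p M)}\le C\|\eta\|_{H_d\Omega^m(M)}$. Passing to the limit $j\to\infty$ in \eqref{eqn::integration by parts for d and delta} with $\eta_j$ in place of $\eta$ and $\zeta\in H^1\Omega^{m+1}(M)$ fixed — the left side converges because $\mathbf t(\eta_j)\to\mathbf t(\eta)$ in $H^{-1/2}$ and $\mathbf t(i_\nu\zeta)\in H^{1/2}$, the right side because $\eta_j\to\eta$ and $d\eta_j\to d\eta$ in $L^2$ — yields the stated identity.

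The main obstacle is the density statement of the second paragraph, i.e.\ the boundary-flattening-and-mollification argument showing $C^\infty\Omega^m(M)$ is dense in $H_d\Omega^m(M)$ and in $H_\delta\Omega^m(M)$; once this is in hand, everything else is soft functional analysis. Part (b) is then obtained verbatim, replacing \eqref{eqn::integration by parts for d and delta} by its conjugate form, density of smooth forms in $H_\delta\Omega^{m+1}(M)$, and the bounded right inverse of $\mathbf t\colon H^1\Omega^m(M)\to H^{1/2}\Omega^m(\p M)$ from Section~\ref{section::extensions of t and t i_nu}.
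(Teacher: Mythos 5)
Your proposal is correct, and the core estimate is the same as the paper's: test $\mathbf t(w)$ against $f=\mathbf t(i_\nu\zeta)$ for an $H^1$-lift $\zeta$ of $f$ with $\|\zeta\|_{H^1}\lesssim\|f\|_{H^{1/2}}$, apply the smooth integration by parts formula \eqref{eqn::integration by parts for d and delta}, and bound the result by $\|w\|_{H_d\Omega^m(M)}\|f\|_{H^{1/2}\Omega^m(\p M)}$. Where you diverge is in how the extension is realized. The paper defines $\mathbf t(\eta)$ for arbitrary $\eta\in H_d\Omega^m(M)$ \emph{directly} by the duality formula $(\mathbf t(\eta)|\mathbf t(i_\nu\zeta))_{\p M}=(d\eta|\zeta)-(\eta|\delta\zeta)$, which avoids any density input at this stage; you instead extend by continuity from $C^\infty\Omega^m(M)$, which requires knowing that smooth forms are dense in $H_d\Omega^m(M)$. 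Be careful here: you must not invoke the paper's density results (Propositions~\ref{prop::density of smooth forms in H_delta} and \ref{prop::density of smooth forms in H_d}), since their proofs use the embedding Proposition~\ref{prop::bounded embedding of H_{d,delta} with regular boundary value into H^1}, which in turn uses the present proposition — that would be circular. You correctly propose an independent flatten/extend/mollify proof of density instead; that argument is standard and sound on a compact manifold with smooth boundary (cutoffs only contribute $d\phi\wedge\eta\in L^2$, and $d$ commutes with the translation/mollification in charts), but it is genuine extra work and is precisely the step you leave as a sketch. The trade-off: your route makes the extension canonical and the limiting argument transparent once density is secured, whereas the paper's shorter route trades the density lemma for a (tacit) verification that the duality definition is independent of the choice of lift $\zeta$. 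Your treatment of part (b) — either by the symmetric argument or via the Hodge star isometry between $H_d$ and $H_\delta$ — matches the paper's.
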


Now we introduce the following space on the boundary $\p M$
\begin{align*}
TH_d\Omega^{m}(\p M)&:=\{\mathbf{t}(w):w\in H_d\Omega^{m}(M)\},\\
TH_{\delta}\Omega^{m}(\p M)&:=\{\mathbf{t}(i_\nu u):u\in H_{\delta}\Omega^{m}(M)\}
\end{align*}
endowed with the norms
\begin{align*}
\|f\|_{TH_d\Omega^{m}(\p M)}&:=\inf\{\|w\|_{H_d\Omega^{m}(M)}:\mathbf{t}(w)=f,\,w\in H_d\Omega^{m}(M)\},\\
\|h\|_{TH_{\delta}\Omega^{m}(\p M)}&:=\inf\{\|u\|_{H_\delta\Omega^{m}(M)}:\mathbf{t}(u)=h,\,u\in H_{\delta}\Omega^{m}(M)\}.
\end{align*}
Then Proposition~\ref{prop::tangential trace operator is extended to H_d} implies that the operators $\mathbf{t}:H_d\Omega^{m}(M)\to TH_d\Omega^{m}(\p M)$ and $\mathbf{t}:H_{\delta}\Omega^{m}(M)\to TH_{\delta}\Omega^{m}(\p M)$ are bounded under these topologies.

\begin{proof}
Let us first prove part (a). Let $w\in C^\infty\Omega^{m}(M)$ and $f\in H^{1/2}\Omega^{m}(\p M)$. Then using integration parts formula~\eqref{eqn::integration by parts for d and delta}, we have
\begin{align*}
(\mathbf{t}(w)|f)_{L^2\Omega^{m}(\p M)}&=(\mathbf{t}(w)|\mathbf{t}(i_\nu \zeta))_{L^2\Omega^{m}(\p M)}\\
&=(dw|\zeta)_{L^2\Omega^{m+1}(M)}-(w|\delta\zeta)_{L^2\Omega^{m}(M)},
\end{align*}
where $\zeta\in H^1\Omega^{m+1}(M)$ such that $\mathbf{t}(i_\nu\zeta)=f$ and $\|\zeta\|_{H^1\Omega^{m+1}(M)}\le C \|f\|_{H^{1/2}\Omega^{m}(\p M)}$. Then
$$
|(\mathbf{t}(w)|f)_{L^2\Omega^{m}(\p M)}|\le C\|w\|_{H_d\Omega^m(M)}\|\zeta\|_{H^1\Omega^{m+1}(M)}\le C\|w\|_{H_d\Omega^m(M)}\|f\|_{H^{1/2}\Omega^{m}(\p M)}.
$$
Therefore, $\mathbf{t}$ can be extended to a bounded operator $H_d\Omega^m(M)\to H^{-1/2}\Omega^m(\p M)$. In fact, if $\eta\in H_d\Omega^m(M)$, then we define $\mathbf{t}(\eta)$ as
$$
(\mathbf{t}(\eta)|\mathbf{t}(i_\nu \zeta))_{\p M}=(d\eta|\zeta)_{L^2\Omega^{m+1}(M)}-(\eta|\delta\zeta)_{L^2\Omega^{m}(M)},
$$
where $\zeta\in H^1\Omega^{m+1}(M)$.\smallskip

Now we prove part (b). Let $w\in C^\infty\Omega^{m+1}(M)$ and $f\in H^{1/2}\Omega^{m}(\p M)$. Then using integration parts formula~\eqref{eqn::integration by parts for d and delta}, we have
\begin{align*}
(\mathbf{t}(i_\nu w)|f)_{L^2\Omega^{m}(\p M)}&=(\mathbf{t}(i_\nu w)|\mathbf{t}(u))_{L^2\Omega^{m}(\p M)}\\
&=(w|du)_{L^2\Omega^{m+1}(M)}-(\delta w|u)_{L^2\Omega^{m}(M)},
\end{align*}
where $u\in H^1\Omega^{m}(M)$ such that $\mathbf{t}(u)=f$ and $\|u\|_{H^1\Omega^{m}(M)}\le C \|f\|_{H^{1/2}\Omega^{m}(\p M)}$. Therefore, we can estimate
$$
|(\mathbf{t}(i_\nu w)|f)_{L^2\Omega^{m}(\p M)}|\le C\|w\|_{H_\delta \Omega^{m+1}(M)}\|\zeta\|_{H^1\Omega^{m}(M)}\le C\|w\|_{H_\delta\Omega^{m+1}(M)}\|f\|_{H^{1/2}\Omega^{m}(\p M)}.
$$
Thus, $\mathbf{t}(i_\nu\,\cdot\,)$ can be extended to a bounded operator $H_\delta\Omega^{m+1}(M)\to H^{-1/2}\Omega^m(\p M)$. In fact, if $\zeta\in H_\delta\Omega^{m+1}(M)$ we define $\mathbf{t}(i_\nu \zeta)$ as
$$
(\mathbf{t}(i_\nu \zeta)|\mathbf{t}(\eta))_{\p M}=(\zeta|d\eta)_{L^2\Omega^{m+1}(M)}-(\delta \zeta|\eta)_{L^2\Omega^{m}(M)},
$$
where $\eta\in H^1\Omega^m(M)$.
\end{proof}

\subsection{Embedding property}

We will also need the following embedding result.
\begin{Proposition}\label{prop::bounded embedding of H_{d,delta} with regular boundary value into H^1}
If $u\in H_{d}\Omega^m(M)\cap H_\delta\Omega^m(M)$ with $\mathbf{t}(u)\in H^{1/2}\Omega^m(\p M)$%
, then $u\in H^1\Omega^m(M)$ and
$$\|u\|_{H^1\Omega^m(M)}\le C\big(\|u\|_{H_d\Omega^m(M)}+\|\delta u\|_{L^2\Omega^{m-1}(M)}+\|{\mathbf{t}}(u)\|_{H^{1/2}\Omega^m(\p M)}\big)\\
$$
for some constant $C>0$ independent of $u$.
\end{Proposition}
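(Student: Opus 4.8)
The plan is to reduce the statement to a Gaffney-type inequality, prove that inequality first for smooth forms with vanishing tangential trace, and then remove the smoothness by localization and tangential regularization. First I would eliminate the boundary value. Since $\mathbf t(u)\in H^{1/2}\Omega^m(\p M)$, the trace extension recalled in Subsection~\ref{section::extensions of t and t i_nu} provides $u_0\in H^1\Omega^m(M)$ with $\mathbf t(u_0)=\mathbf t(u)$ and $\|u_0\|_{H^1\Omega^m(M)}\le C\|\mathbf t(u)\|_{H^{1/2}\Omega^m(\p M)}$. Setting $v=u-u_0$ we get $v\in H_d\Omega^m(M)\cap H_\delta\Omega^m(M)$ with $\mathbf t(v)=0$ and with each of $\|dv\|_{L^2}$, $\|\delta v\|_{L^2}$, $\|v\|_{L^2}$ bounded by $C\big(\|u\|_{H_d\Omega^m(M)}+\|\delta u\|_{L^2\Omega^{m-1}(M)}+\|\mathbf t(u)\|_{H^{1/2}\Omega^m(\p M)}\big)$. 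Hence it suffices to show that any $v\in H_d\Omega^m(M)\cap H_\delta\Omega^m(M)$ with $\mathbf t(v)=0$ lies in $H^1\Omega^m(M)$ and satisfies the Gaffney inequality $\|v\|_{H^1\Omega^m(M)}\le C\big(\|dv\|_{L^2}+\|\delta v\|_{L^2}+\|v\|_{L^2}\big)$; adding $u_0$ back then yields the Proposition.

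I would establish this inequality first for smooth $v$. The Bochner--Weitzenb\"ock identity for $m$-forms gives, for $v\in C^\infty\Omega^m(M)$,
\[
\|dv\|_{L^2}^2+\|\delta v\|_{L^2}^2=\|\nabla v\|_{L^2}^2+\int_M\langle\mathcal R v,v\rangle_g\,d\Vol_g+\int_{\p M}\mathcal B(v),
\]
where $\mathcal R$ is a zeroth order curvature term and the boundary density $\mathcal B(v)$, when $\mathbf t(v)=0$, reduces to a quadratic expression in $i_\nu v|_{\p M}$ with coefficients built from the second fundamental form of $\p M$; in particular it carries no derivatives of $v$, so $|\int_{\p M}\mathcal B(v)|\le C\|v\|_{L^2\Omega^m(\p M)}^2$. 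Using the elementary trace bound $\|v\|_{L^2\Omega^m(\p M)}^2\le\epsilon\|\nabla v\|_{L^2}^2+C_\epsilon\|v\|_{L^2}^2$ (obtained by applying the divergence theorem to $|v|_g^2$ times a vector field equal to $\nu$ on $\p M$) and absorbing, one gets $\|v\|_{H^1\Omega^m(M)}\le C\big(\|dv\|_{L^2}+\|\delta v\|_{L^2}+\|v\|_{L^2}\big)$ for all smooth $v$ with $\mathbf t(v)=0$. By the standard density of $\{w\in C^\infty\Omega^m(M):\mathbf t(w)=0\}$ in $\{w\in H^1\Omega^m(M):\mathbf t(w)=0\}$ in the $H^1$ norm, the same inequality holds for every $v\in H^1\Omega^m(M)$ with $\mathbf t(v)=0$.

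It then remains to upgrade a merely $H_d\cap H_\delta$ form to $H^1$. Take $v\in H_d\Omega^m(M)\cap H_\delta\Omega^m(M)$ with $\mathbf t(v)=0$ and a partition of unity $\{\chi_j\}$ subordinate to a finite cover of $M$ by interior charts and boundary collar charts. For an interior piece, $\Delta(\chi_j v)=-(d\delta+\delta d)(\chi_j v)\in H^{-1}\Omega^m(M)$ is compactly supported in $M^{\rm int}$, so interior regularity for the Laplace-type operator $\Delta$ gives $\chi_j v\in H^1$ with the appropriate bound. For a boundary piece, flatten the collar so that $\p M$ is a piece of a hyperplane; then $\chi_j v$ becomes a compactly supported form $w$ with $dw,\delta w\in L^2$ and $\mathbf t(w)=0$. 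Regularize $w$ in the tangential variables only, $w_\rho=\phi_\rho\ast' w$; then $\mathbf t(w_\rho)=0$, while $w_\rho\to w$, $dw_\rho\to dw$, $\delta w_\rho\to\delta w$ in $L^2$ (standard Friedrichs commutator estimates) and $w_\rho$ is smooth in the tangential directions. Since the normal principal symbol of $d+\delta$ is Clifford multiplication by the unit conormal, hence invertible, the normal derivative $\partial_n w_\rho$ can be written as an explicit linear combination of the components of $dw_\rho$, $\delta w_\rho$, the tangential first derivatives of $w_\rho$, and $w_\rho$ itself; thus $\partial_n w_\rho\in L^2$ and $w_\rho\in H^1$. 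The $H^1$ Gaffney inequality of the previous step, transported back to $M$, applies to the regularized $\chi_j v$ and bounds its $H^1$ norm by $C\big(\|dw_\rho\|_{L^2}+\|\delta w_\rho\|_{L^2}+\|w_\rho\|_{L^2}\big)$ uniformly in $\rho$; letting $\rho\to0$ gives $\chi_j v\in H^1$. Summing over $j$ yields $v\in H^1\Omega^m(M)$, and collecting the estimates gives the Gaffney inequality, completing the proof.

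The step I expect to be the main obstacle is the boundary regularity in the last paragraph: one must make sure the tangentially regularized pieces genuinely lie in $H^1$ up to $\p M$ and that their $H^1$ norms do not blow up as $\rho\to0$. This rests on the algebraic fact that $d$ and $\delta$ together control the full first-order jet modulo tangential derivatives (invertibility of the conormal symbol of $d+\delta$), coupled with the way the condition $\mathbf t(v)=0$ controls the tangential derivatives through the Gaffney identity. One must also verify carefully that, when $\mathbf t(v)=0$, the curvature and second-fundamental-form contributions to the Bochner identity are genuinely of lower order, and that tangential regularization in a collar chart preserves the vanishing tangential trace; alternatively one may invoke the corresponding regularity results in \cite{Schwarz1995}.
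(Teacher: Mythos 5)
Your proposal is correct in outline, but it takes a genuinely different route from the paper. The paper's proof is constructive in a different sense: it invokes the solvability of the Hodge boundary value problem (Lemma~\ref{lemma::from Schwarz thm3.2.5}, i.e.\ \cite[Theorem~3.2.5]{Schwarz1995}) to produce a form $\psi\in H^1\Omega^m(M)$ with $d\psi=du$, $\delta\psi=\delta u$ and $\mathbf t(\psi)=\mathbf t(u)$ — after verifying the compatibility conditions $dw=0$, $\delta v=0$, $\mathbf t(w)=\mathbf t(dh)$ and the orthogonality relations against $\mathcal H^{m\pm1}_D(M)$ by integration by parts — and then concludes $u=\psi$ from a uniqueness result for the Dirichlet problem for the Hodge Laplacian \cite[Theorem~2.2.4]{Schwarz1995}. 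You instead reduce to vanishing tangential trace, prove the Gaffney inequality via the Bochner--Weitzenb\"ock identity (the boundary term being zeroth order when $\mathbf t(v)=0$), and upgrade $H_d\cap H_\delta$ regularity to $H^1$ by localization, tangential Friedrichs mollification, and invertibility of the conormal symbol of $d+\delta$. What each approach buys: the paper's argument is short and sidesteps all the regularity analysis by outsourcing it to Schwarz's existence theorem, at the cost of having to check the cohomological compatibility conditions of that theorem; your argument is self-contained and avoids those compatibility conditions entirely, but the price is the boundary-regularity step, which is exactly where the real work lives (preservation of $\mathbf t=0$ under tangential mollification, the commutator estimates for the variable-coefficient $\delta$, and uniformity of the $H^1$ bounds in $\rho$). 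You have correctly identified these as the delicate points, and they all go through by the standard arguments you cite; this is essentially the classical proof of Gaffney's inequality with the Dirichlet-type boundary condition. Either way the final estimate follows by adding back the $H^1$ extension $u_0$ of the trace, exactly as you indicate.
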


In Euclidean setting, this was proven in the case $m=1$ by Costabel \cite{Costabel1990}; see also \cite{KirschHettlich2015,Monk2003}. Here we give a new proof, which can be carried out over manifolds and for arbitrary $m$. Our proof is based on the following result from \cite{Schwarz1995}. We write
$$
\mathcal H^{m}_D(M):=\{u\in H^1\Omega^m(M):du=0,\quad \delta u=0,\quad \mathbf t(u)=0\}.
$$

\begin{Lemma}\label{lemma::from Schwarz thm3.2.5}
Let $k\ge 0$ be an integer. Given $w\in H^k\Omega^{m+1}(M)$, $v\in H^k\Omega^{m-1}(M)$ and $h\in H^{k+1}\Omega^m(M)$, there is a unique $\psi\in H^{k+1}\Omega^m(M)$, up to a form in $\mathcal H_D^m(M)$, that solves
$$
d\psi=w,\quad \delta \psi=v,\quad \mathbf{t}(\psi)=\mathbf{t}(h)
$$
if and only if
$$
dw=0,\quad \mathbf t(w)=\mathbf t(dh),\quad \delta v=0
$$
and
$$
(w|\chi)_{L^2\Omega^{m+1}(M)}=(\mathbf t(h)|\mathbf t(i_\nu \chi))_{L^2\Omega^m(\p M)},\quad (v|\lambda)_{L^2\Omega^{m-1}(M)}=0
$$
for all $\chi\in\mathcal H^{m+1}_D(M)$, $\lambda\in\mathcal H^{m-1}_D(M)$. Moreover, $\psi$ satisfies the estimate
\begin{align*}
\|\psi\|_{H^{k+1}\Omega^m(M)}\le &C\big(\|w\|_{H^k\Omega^{m+1}(M)}+\|v\|_{H^k\Omega^{m-1}(M)}\big)\\
&+C\big(\|\mathbf t(h)\|_{H^{k+1/2}\Omega^m(\p M)}+\|\mathbf t(*h)\|_{H^{k+1/2}\Omega^{n-m}(\p M)}\big).
\end{align*}
\end{Lemma}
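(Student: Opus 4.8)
The plan is to read the system as a first-order boundary value problem for the Hodge pair $(d,\delta)$ acting on $m$-forms and to reduce its solvability, by the Hodge--Morrey--Friedrichs decomposition together with a Hodge-star duality, to two \emph{primitive} problems for $d$ alone: one with tangential and one with normal boundary data. For each of these the existence of a coclosed primitive, the description of the obstruction, and the $H^{k+1}$ elliptic estimate are available in \cite{Schwarz1995}.

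\emph{Necessity and uniqueness.} Suppose $\psi\in H^{k+1}\Omega^m(M)$ solves $d\psi=w$, $\delta\psi=v$, $\mathbf{t}(\psi)=\mathbf{t}(h)$. Then $dw=d^2\psi=0$ and $\delta v=\delta^2\psi=0$, while $\mathbf{t}(w)=\mathbf{t}(d\psi)=\mathbf{t}(dh)$ since pullback commutes with $d$ and depends on $\psi$ only through $\mathbf{t}(\psi)=\mathbf{t}(h)$. Applying \eqref{eqn::integration by parts for d and delta} (extended to these Sobolev forms by Proposition~\ref{prop::tangential trace operator is extended to H_d}) with $(\eta,\zeta)=(\psi,\chi)$, $\chi\in\mathcal H^{m+1}_D(M)$, and using $\delta\chi=0$ gives $(w|\chi)=(d\psi|\chi)=(\psi|\delta\chi)+(\mathbf{t}(\psi)|\mathbf{t}(i_\nu\chi))=(\mathbf{t}(h)|\mathbf{t}(i_\nu\chi))$; applying it with $(\eta,\zeta)=(\lambda,\psi)$, $\lambda\in\mathcal H^{m-1}_D(M)$, and using $d\lambda=0$, $\mathbf{t}(\lambda)=0$ gives $(v|\lambda)=0$. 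If $\psi_1,\psi_2$ both solve the system then $\psi_1-\psi_2$ is closed, coclosed, and has vanishing tangential trace, i.e. belongs to $\mathcal H^m_D(M)$; this is the stated uniqueness.

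\emph{Existence.} I would first move $h$ to the source side. All five compatibility conditions depend on $h$ only through $\mathbf{t}(h)$: for the last two this is seen by rewriting $(dh|\chi)=(h|\delta\chi)+(\mathbf{t}(h)|\mathbf{t}(i_\nu\chi))=(\mathbf{t}(h)|\mathbf{t}(i_\nu\chi))$ and $(\delta h|\lambda)=(h|d\lambda)-(\mathbf{t}(i_\nu h)|\mathbf{t}(\lambda))=0$, using $\chi,\lambda\in\mathcal H_D$. Hence, using the (higher-order version of the) extension result quoted in Section~\ref{section::extensions of t and t i_nu}, pick $h'\in H^{k+1}\Omega^m(M)$ with $\mathbf{t}(h')=\mathbf{t}(h)$ and $\|h'\|_{H^{k+1}\Omega^m(M)}\le C\|\mathbf{t}(h)\|_{H^{k+1/2}\Omega^m(\p M)}$, and set $w_0:=w-dh'$, $v_0:=v-\delta h'$; then $dw_0=0$, $\mathbf{t}(w_0)=\mathbf{t}(w)-\mathbf{t}(dh)=0$, $\delta v_0=0$, $(w_0|\chi)=0$ for $\chi\in\mathcal H^{m+1}_D(M)$, and $(v_0|\lambda)=0$ for $\lambda\in\mathcal H^{m-1}_D(M)$, and it remains to solve $d\psi_0=w_0$, $\delta\psi_0=v_0$, $\mathbf{t}(\psi_0)=0$. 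Write $\psi_0=\alpha+\beta$. For $\alpha$, \cite{Schwarz1995} provides a coclosed primitive with vanishing tangential trace, $d\alpha=w_0$, $\delta\alpha=0$, $\mathbf{t}(\alpha)=0$, with $\|\alpha\|_{H^{k+1}\Omega^m(M)}\le C\|w_0\|_{H^k\Omega^{m+1}(M)}$; its hypotheses are precisely $dw_0=0$, $\mathbf{t}(w_0)=0$, $w_0\perp\mathcal H^{m+1}_D(M)$. For $\beta$, pass to the Hodge dual $\tilde\beta=*\beta$: the conditions $d\beta=0$, $\delta\beta=v_0$, $\mathbf{t}(\beta)=0$ become $\delta\tilde\beta=0$, $d\tilde\beta=\pm* v_0$, and vanishing \emph{normal} part of $\tilde\beta$ on $\p M$, which is again a coclosed-primitive problem for $d$ with normal boundary data; it is solvable by \cite{Schwarz1995} under $d(*v_0)=\pm*\delta v_0=0$ and orthogonality of $*v_0$ to the Neumann (normal-trace) harmonic fields $\mathcal H^{n-m+1}_N(M)$, and since $*$ is an $L^2$-isometry taking $\mathcal H^{n-m+1}_N(M)$ onto $\mathcal H^{m-1}_D(M)$ this is exactly $(v_0|\lambda)=0$ for $\lambda\in\mathcal H^{m-1}_D(M)$, with $\|\beta\|_{H^{k+1}\Omega^m(M)}\le C\|v_0\|_{H^k\Omega^{m-1}(M)}$. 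Then $\psi:=h'+\alpha+\beta$ solves the system, and combining the three estimates with $\|w_0\|_{H^k}\le\|w\|_{H^k}+C\|h'\|_{H^{k+1}}$ and $\|v_0\|_{H^k}\le\|v\|_{H^k}+C\|h'\|_{H^{k+1}}$ yields $\|\psi\|_{H^{k+1}\Omega^m(M)}\le C(\|w\|_{H^k\Omega^{m+1}(M)}+\|v\|_{H^k\Omega^{m-1}(M)}+\|\mathbf{t}(h)\|_{H^{k+1/2}\Omega^m(\p M)})$, which implies the asserted estimate: $\|\mathbf{t}(*h)\|_{H^{k+1/2}\Omega^{n-m}(\p M)}$ enters there only as an extra upper bound, and it is comparable to $\|\mathbf{t}(i_\nu h)\|_{H^{k+1/2}}$, the two differing by the Hodge star of $\p M$.

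\emph{The main obstacle.} The entire analytic content lies in the two borrowed facts from \cite{Schwarz1995}: the Hodge--Morrey--Friedrichs decomposition on a compact manifold with boundary, for both tangential and normal boundary conditions, and the elliptic regularity of the corresponding boundary value problems, which yield the primitives $\alpha$ and $\tilde\beta$ together with their $H^{k+1}$ bounds. Once these are granted, the remaining work is bookkeeping: matching the five compatibility conditions to the hypotheses of those theorems, checking that they are preserved when $h$ is replaced by $h'$, and tracking the Hodge-star duality that turns the $\delta$-part of the problem into a $d$-problem with normal boundary data.
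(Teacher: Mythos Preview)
Your argument is correct. The paper does not actually prove this lemma: its entire proof is the one-line citation ``Follows from \cite[Theorem~3.2.5]{Schwarz1995}.'' You have instead sketched the standard proof of that theorem itself, reducing the combined $(d,\delta)$ boundary value problem to two primitive problems via the Hodge--Morrey--Friedrichs machinery, and your bookkeeping of the compatibility conditions and the Hodge-star duality (taking the tangential condition $\mathbf{t}(\beta)=0$ to the normal condition $\mathbf{n}(*\beta)=0$, and $\mathcal H_D^{m-1}$ to $\mathcal H_N^{n-m+1}$) is accurate. Your estimate is in fact slightly sharper than the one stated, since your extension $h'$ depends only on $\mathbf{t}(h)$; the extra term $\|\mathbf{t}(*h)\|_{H^{k+1/2}}$ in the lemma is harmless slack. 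One small caution: make sure the Neumann-type primitive you invoke for $\tilde\beta$ (namely $d\tilde\beta=\ast v_0$, $\delta\tilde\beta=0$, $\mathbf n(\tilde\beta)=0$) is precisely the version available in \cite{Schwarz1995} with the $H^{k+1}$ estimate; it is, but it is a different statement from the tangential primitive used for $\alpha$, not merely the same statement applied twice.
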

\begin{proof}
Follows from \cite[Theorem~3.2.5]{Schwarz1995}.
\end{proof}

\begin{proof}[Proof of Proposition~\ref{prop::bounded embedding of H_{d,delta} with regular boundary value into H^1}]
For a given $u\in H_{d}\Omega^m(M)\cap H_\delta\Omega^m(M)$, write $w=du\in L^2\Omega^{m+1}(M)$ and $v=\delta u\in L^2\Omega^{m-1}(M)$. Since $\mathbf{t}(u)\in H^{1/2}\Omega^m(\p M)$, by discussion in Section~\ref{section::extensions of t and t i_nu} there is $h\in H^1\Omega^m(M)$ such that $\mathbf{t}(h)=\mathbf{t}(u)$, $\mathbf t(i_\nu h)=0$ and
\begin{equation}\label{eqn::estimate for the trace of h in the proof of embedding result}
\|h\|_{H^1\Omega^m(M)}\le C\|\mathbf t(h)\|_{H^{1/2}\Omega^m(\p M)}=C\|\mathbf t(u)\|_{H^{1/2}\Omega^m(\p M)}.
\end{equation}
We wish to use Lemma~\ref{lemma::from Schwarz thm3.2.5}, and hence we need to show that $w$, $v$ and $h$ satisfy the hypothesis of Lemma~\ref{lemma::from Schwarz thm3.2.5}. Obviously, we have $dw=0$ and $\delta v=0$. Integrating by parts and using that $\mathbf t(u)=\mathbf t(h)$, we can show that for all $\chi\in\mathcal H^{m+1}_D(M)$
$$
(w|\chi)_{L^2\Omega^{m+1}(M)}=(du|\chi)_{L^2\Omega^{m+1}(M)}=(\mathbf{t}(h)|\mathbf{t}(i_\nu\chi))_{L^2\Omega^m(\p M)}.
$$
Similary for all $\lambda\in\mathcal H^{m-1}_D(M)$, using the integration by parts formula in part (b) of Proposition~\ref{prop::tangential trace operator is extended to H_d}, we can show that
$$
(v|\lambda)_{L^2\Omega^{m-1}(M)}=(\delta u|\lambda)_{L^2\Omega^{m-1}(M)}=-(\mathbf t(i_\nu u)|\mathbf t(\lambda))_{\p M}=0.
$$
Next, we show that $\mathbf t(w)=\mathbf t(dh)$. For arbitrary $\varphi\in H^{1/2}\Omega^{m+1}(\p M)$, as discussed in Section~\ref{section::extensions of t and t i_nu}, there is $\zeta\in H^1\Omega^{m+2}(M)$ such that $\mathbf t(i_\nu \zeta)=\varphi$. Then, using integration by parts formulas in Proposition~\ref{prop::tangential trace operator is extended to H_d}, we get
$$
(\mathbf t(w)|\varphi)_{\p M}=(\mathbf t(du)|\mathbf t(i_\nu \zeta))_{\p M}=-(du|\delta\zeta)_{L^2\Omega^{m+1}(M)}=-(\mathbf t(u)|\mathbf t(i_\nu \delta \zeta))_{\p M}.
$$
Since $\mathbf t(u)=\mathbf t(h)$, using integration by parts formulas in Proposition~\ref{prop::tangential trace operator is extended to H_d}, gives
$$
(\mathbf t(w)|\varphi)_{\p M}=-(\mathbf t(h)|\mathbf t(i_\nu \delta \zeta))_{\p M}=-(dh|\delta\zeta)_{L^2\Omega^{m+1}(M)}=(\mathbf t(dh)|\varphi)_{\p M},
$$
which implies $\mathbf t(w)=\mathbf t(dh)$. Therefore, applying Lemma~\ref{lemma::from Schwarz thm3.2.5} we find $\psi\in H^1\Omega^m(M)$ such that $d\psi=w$, $\delta\psi=v$ and ${\mathbf{t}}(\psi)={\mathbf{t}}(h)={\mathbf{t}}(u)$ and satisfying
\begin{align*}
\|\psi\|_{H^{1}\Omega^m(M)}\le &C\big(\|w\|_{L^2\Omega^{m+1}(M)}+\|v\|_{L^2\Omega^{m-1}(M)}\big)\\
&+C\big(\|\mathbf t(u)\|_{H^{1/2}\Omega^m(\p M)}+\|\mathbf t(*h)\|_{H^{1/2}\Omega^{n-m}(\p M)}\big).
\end{align*}
Using boundedness of $\mathbf t:H^1\Omega^{n-m}(M)\to H^{1/2}\Omega^{n-m}(\p M)$ and \eqref{eqn::estimate for the trace of h in the proof of embedding result},
$$
\|\mathbf t(*h)\|_{H^{1/2}\Omega^{n-m}(\p M)}\le C\|*h\|_{H^1\Omega^{n-m}(M)}\le C\|h\|_{H^1\Omega^{m}(M)}\le C\|\mathbf t(u)\|_{H^{1/2}\Omega^{m}(\p M)}.
$$
Therefore, $\psi$ satisfies the estimate
$$
\|\psi\|_{H^{1}\Omega^m(M)}\le C\big(\|w\|_{L^2\Omega^{m+1}(M)}+\|v\|_{L^2\Omega^{m-1}(M)}+\|\mathbf t(u)\|_{H^{1/2}\Omega^m(\p M)}\big).
$$
Write $\rho=u-\psi$, then $d\rho=0$ and $\delta \rho=0$. Therefore, $\rho$ solves $-\Delta \rho=0$ with $\mathbf t(\rho)=0$, $\mathbf t(\delta \rho)=0$. By \cite[Theorem~2.2.4]{Schwarz1995}, it follows that $\rho=0$. This clearly implies the result.
\end{proof}

\subsection{Density properties}
In this subsection we prove the following two results regarding the density of $C^\infty\Omega^m(M)$ in both $H_d\Omega^{m}(M)$ and $H_\delta\Omega^{m}(M)$.
\begin{Proposition}\label{prop::density of smooth forms in H_delta}
The space $C^\infty\Omega^m(M)$ is dense in $H_\delta\Omega^{m}(M)$.
\end{Proposition}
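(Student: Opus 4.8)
The plan is to prove density of $C^\infty\Omega^m(M)$ in $H_\delta\Omega^m(M)$ by a standard two-step strategy: first reduce to compactly supported forms by a cutoff/localization argument near $\p M$, then mollify in the interior. Since $\delta$ acts on $m$-forms exactly as $\pm * d *$ (up to signs, by \eqref{eqn::delta in terms of * and d}), and $*$ is an isometry on $L^2$ intertwining $H_\delta\Omega^m(M)$ with $H_d\Omega^{n-m}(M)$, an equivalent route is to establish density of $C^\infty\Omega^{n-m}(M)$ in $H_d\Omega^{n-m}(M)$ and transport it through $*$; I would mention this reduction but carry out the argument directly for $\delta$.

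First I would fix $u\in H_\delta\Omega^m(M)$ and show it can be approximated by forms that are smooth up to the boundary. Take a collar neighborhood $U\cong \p M\times[0,\epsilon)$ of $\p M$ with boundary defining function $t$, and a partition of unity $\{\chi_0,\chi_1\}$ with $\supp\chi_0\subset M^{\mathrm{int}}$ and $\supp\chi_1\subset U$. For the interior piece $\chi_0 u$, use \eqref{eqn::divergence of fw}, i.e. $\delta(\chi_0 u)=\chi_0\delta u - i_{d\chi_0}u\in L^2$, so $\chi_0 u\in H_\delta\Omega^m(M)$ is compactly supported in $M^{\mathrm{int}}$ and can be approximated in $H_\delta$ by convolution with a standard mollifier (in local coordinates, patched together), since mollification commutes with the constant-coefficient part of $\delta$ and the lower-order coefficient errors tend to $0$ in $L^2$ by the Friedrichs lemma. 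For the boundary piece $\chi_1 u$, supported in the collar, I would translate inward: define $u_s(x',t)=(\chi_1 u)(x',t+s)$ for small $s>0$, extended by $0$ for $t+s\ge\epsilon$. Then $\delta u_s$ is the corresponding inward translate of $\delta(\chi_1 u)\in L^2$, so $u_s\to\chi_1 u$ in $H_\delta\Omega^m(M)$ as $s\to 0^+$ by continuity of translation in $L^2$; moreover $u_s$ is supported in $\{t\ge s\}$, hence away from $\p M$, and can be mollified as in the interior case. Combining, $u$ is approximated in $H_\delta$ by forms compactly supported in $M^{\mathrm{int}}$, and then by smooth forms.

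The main obstacle is making the inward-translation step clean on a manifold: the collar diffeomorphism distorts both the metric and the form components, so $\delta$ in collar coordinates has variable, non-constant-coefficient terms, and one must check that translating in the $t$-variable still produces an $L^2$-convergent approximation of $\delta u$ rather than just of the principal part. The way around this is to note that $\delta u_s = \tau_s(\delta(\chi_1 u)) + R_s$, where $\tau_s$ is the $L^2$-continuous translation and $R_s$ collects commutator terms whose coefficients are the differences of the (smooth, bounded) metric coefficients at $t$ versus $t+s$; these coefficients tend to $0$ uniformly, and they multiply the fixed $L^2$ form $u$, so $\|R_s\|_{L^2}\to 0$. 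Thus no genuine new difficulty arises beyond bookkeeping. (Alternatively, one can bypass the collar entirely by invoking the $*$-duality with Proposition-type density statements for $H_d$, if those are established first; I would include this as a remark.) The remaining steps — patching local mollifications via the partition of unity and verifying that the errors in $\delta$ are $o(1)$ in $L^2$ — are routine applications of the Friedrichs commutator lemma and \eqref{eqn::divergence of fw}.
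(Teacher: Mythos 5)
Your strategy (localize, translate near the boundary, mollify) is genuinely different from the paper's proof, which is a Hilbert-space orthogonality argument: the paper assumes $u\perp C^\infty\Omega^m(M)$ in the $H_\delta$ inner product, extends $u$ and $\delta u$ by zero to a larger manifold to deduce $\widetilde u=-d\widetilde{\delta u}$, invokes the embedding Proposition~\ref{prop::bounded embedding of H_{d,delta} with regular boundary value into H^1} to conclude $\delta u\in H^1_D$, and then integrates by parts to get $u=0$. That route avoids collar coordinates and mollification entirely, at the price of relying on the nontrivial embedding result. Your route, if carried out correctly, would be more self-contained; but as written it has two real gaps.

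First, the endpoint of your argument is wrong: you cannot approximate a general $u\in H_\delta\Omega^m(M)$ by forms compactly supported in $M^{\rm int}$. By Proposition~\ref{prop::tangential trace operator is extended to H_d}(b) the normal trace $\mathbf t(i_\nu\,\cdot\,)$ is continuous on $H_\delta\Omega^m(M)$ and vanishes on $C^\infty_0\Omega^m(M^{\rm int})$, so the closure of the compactly supported forms is the proper subspace $\{\mathbf t(i_\nu u)=0\}$; if your scheme really produced such approximants it would prove a false statement. Relatedly, your translate $u_s(x',t)=(\chi_1u)(x',t+s)$ is \emph{not} supported in $\{t\ge s\}$ — it shifts the support toward $t=-s$. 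That formula is in fact the right one, but for a different reason: it defines $u_s$ on a neighborhood $\{t\ge -s\}$ of $\overline{M\cap U}$, so that mollifying with parameter $\ll s$ yields forms smooth \emph{up to} (indeed across) $\p M$, which is what the proposition asserts. Second, your treatment of the commutator $R_s=\delta u_s-\tau_s(\delta(\chi_1u))$ is incorrect: the first-order part of $\delta$ in collar coordinates contributes terms of the form $(a(t)-a(t+s))\,\p u(\cdot+s)$, i.e.\ small coefficients multiplying \emph{first derivatives} of $u$, not $u$ itself, and for $u\in H_\delta$ only the particular combination $\delta u$ lies in $L^2$. This is exactly the difficulty that makes the Euclidean proof (where $\operatorname{div}$ has constant coefficients and commutes with translation exactly) not transfer verbatim to manifolds. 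It can be repaired — treat the translation and mollification together as convolution against the shifted kernel $\rho_\epsilon(\cdot+se)$ with $\epsilon\approx s$ and apply the Friedrichs commutator lemma to that one-parameter family — but that is a genuine additional argument, not bookkeeping, and it is precisely the step your proposal waves away.
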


\begin{proof}
The statement is equivalent to showing that if $u\in H_\delta\Omega^m(M)$ is orthogonal to $C^\infty\Omega^m(M)$ in $H_\delta \Omega^m(M)$-inner product, then $u=0$. Suppose that
\begin{equation}\label{eqn::u is perp to smooth forms in H_delta}
(u|\phi)_{H_\delta \Omega^m(M)}=(u|\phi)_{L^2\Omega^m(M)}+(\delta u|\delta\phi)_{L^2\Omega^{m-1}(M)}=0,\quad \phi\in C^\infty\Omega^m(M).
\end{equation}
Let $\widetilde M$ be a compact manifold with smooth boundary such that $M\subset\subset\widetilde M^{\rm int}$ and let by $g$ on $\widetilde M$ we denote a smooth extension of $g$ from $M$ to $\widetilde M$. Let $\widetilde u$ and $\widetilde{\delta u}$ denote the extensions of $u$ and $\delta u$ to $\widetilde M$ by zero. It is clear that $\widetilde u\in L^2\Omega^m(\widetilde M)$ and $\widetilde{\delta u}\in L^2\Omega^{m-1}(\widetilde M)$. By \eqref{eqn::u is perp to smooth forms in H_delta}, $\widetilde u$ and $\widetilde{\delta u}$ satisfy
$$
(\widetilde u|\phi)_{L^2\Omega^m(\widetilde M)}+(\widetilde{\delta u}|\delta\phi)_{L^2\Omega^{m-1}(\widetilde M)}=0,\quad \phi\in C^\infty_0\Omega^m(\widetilde M^{\rm int}).
$$
This in particular implies that $\widetilde u=-d\widetilde{\delta u}$. Since $\widetilde u\in L^2\Omega^m(\widetilde M)$, we have $\widetilde{\delta u}\in H_{d,0}\Omega^{m-1}(\widetilde M)$. Therefore, $\delta u=\widetilde{\delta u}|_{M}\in H_{d}\Omega^{m-1}(M)\cap H_\delta \Omega^{m-1}(M)$. Since $\widetilde{\delta u}=0$ in $\widetilde M\setminus M$, we have $\mathbf{t}(\delta u)=\mathbf{t}(\widetilde{\delta u})=0$ on $\p M$. Then by Proposition~\ref{prop::bounded embedding of H_{d,delta} with regular boundary value into H^1}, $\delta u\in H^1\Omega^{m-1}_D(M)$. There is a sequence $\{\phi_k\}_{k=1}^\infty\subset C^\infty_0\Omega^{m-1}(M^{\rm int})$ such that $\|\delta u-\phi_k\|_{H^1\Omega^{m-1}(M)}\to 0$ as $k\to\infty$. Note also that, in particular, \eqref{eqn::u is perp to smooth forms in H_delta} gives $u=d\delta u$. Using all these facts, we can show that
\begin{align*}
(u|u)_{L^2\Omega^m(M)}+(\delta u|\delta u)_{L^2\Omega^{m-1}(M)}&=(u|d\delta u)_{L^2\Omega^m(M)}+(\delta u|\delta u)_{L^2\Omega^{m-1}(M)}\\
&=\lim_{k\to\infty}\big[(u|d\phi_k)_{L^2\Omega^m(M)}+(\delta u|\phi_k)_{L^2\Omega^{m-1}(M)}\big]\\
&=\lim_{k\to\infty}\big[(d\delta u|d\phi_k)_{L^2\Omega^m(M)}+(\delta u|\phi_k)_{L^2\Omega^{m-1}(M)}\big].
\end{align*}
Integrating by parts and using \eqref{eqn::u is perp to smooth forms in H_delta}, we get
\begin{align*}
(u|u)_{L^2\Omega^m(M)}&+(\delta u|\delta u)_{L^2\Omega^{m-1}(M)}\\
&=\lim_{k\to\infty}\big[(\delta u|\delta d\phi_k)_{L^2\Omega^m(M)}+(u|d\phi_k)_{L^2\Omega^{m-1}(M)}\big]=0.
\end{align*}
This implies $u=0$ as desired.
\end{proof}

\begin{Proposition}\label{prop::density of smooth forms in H_d}
The space $C^\infty\Omega^m(M)$ is dense in $H_d\Omega^{m}(M)$.
\end{Proposition}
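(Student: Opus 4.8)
The plan is to deduce this proposition from Proposition~\ref{prop::density of smooth forms in H_delta} by means of the Hodge star operator, rather than repeating the orthogonality argument used there. Recall from \eqref{eqn::delta in terms of * and d} that $*$ is invertible on forms, with $*^{-1}$ again of the form $\pm*$, and that on $j$-forms $\delta$ coincides, up to sign, with $*\,d\,*$. Combining these two facts with the adjointness of $*$ with respect to the $L^2$ inner product (which follows from \eqref{eqn::Hodge star is isometry in L^2}), and integrating by parts only against smooth forms compactly supported in $M^{\rm int}$, one checks that for $w\in L^2\Omega^m(M)$ the distributional identity $\delta(*w)=\pm*(dw)$ holds; in particular $dw\in L^2\Omega^{m+1}(M)$ if and only if $\delta(*w)\in L^2\Omega^{n-m-1}(M)$. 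Thus $w\mapsto *w$ maps $H_d\Omega^m(M)$ into $H_\delta\Omega^{n-m}(M)$, and, running the same argument for $*^{-1}$, it is a bijection of $H_d\Omega^m(M)$ onto $H_\delta\Omega^{n-m}(M)$.

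Next I would record that this bijection is an isometry for the graph norms. Since $*$ is a pointwise isometry, \eqref{eqn::Hodge star is isometry in L^2} gives $\|*w\|_{L^2\Omega^{n-m}(M)}=\|w\|_{L^2\Omega^m(M)}$ and $\|\delta(*w)\|_{L^2\Omega^{n-m-1}(M)}=\|*(dw)\|_{L^2\Omega^{n-m-1}(M)}=\|dw\|_{L^2\Omega^{m+1}(M)}$, whence $\|*w\|_{H_\delta\Omega^{n-m}(M)}=\|w\|_{H_d\Omega^m(M)}$. Therefore $*\colon H_d\Omega^m(M)\to H_\delta\Omega^{n-m}(M)$ is an isometric isomorphism.

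Finally, I would use that, since $(M,g)$ is oriented, $*$ restricts to a bijection of $C^\infty\Omega^m(M)$ onto $C^\infty\Omega^{n-m}(M)$ preserving supports. Given $w\in H_d\Omega^m(M)$, pick by Proposition~\ref{prop::density of smooth forms in H_delta} a sequence $\phi_k\in C^\infty\Omega^{n-m}(M)$ with $\phi_k\to *w$ in $H_\delta\Omega^{n-m}(M)$; then $*^{-1}\phi_k\in C^\infty\Omega^m(M)$ and, by the isometry property, $*^{-1}\phi_k\to w$ in $H_d\Omega^m(M)$. This yields the claimed density. No genuine obstacle arises here; the only point requiring a little care is the verification of the weak intertwining $\delta(*w)=\pm*(dw)$ for merely $L^2$ forms $w$, which one does by testing against compactly supported smooth forms so that no boundary terms appear. (An alternative, self-contained proof mirroring that of Proposition~\ref{prop::density of smooth forms in H_delta} would instead require the $i_\nu$-version of Proposition~\ref{prop::bounded embedding of H_{d,delta} with regular boundary value into H^1}, which the present argument avoids.)
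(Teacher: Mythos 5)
Your proposal is correct and is essentially the paper's own argument: the paper likewise deduces Proposition~\ref{prop::density of smooth forms in H_d} from Proposition~\ref{prop::density of smooth forms in H_delta} by noting that the Hodge star is an isometry between $H_d\Omega^{m}(M)$ and $H_\delta\Omega^{n-m}(M)$. Your write-up merely spells out the verification (the distributional identity $\delta(*w)=\pm*(dw)$ and the norm equalities) that the paper leaves implicit.
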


\begin{proof}
This follows from Proposition~\ref{prop::density of smooth forms in H_delta} using the fact that the Hodge star operator $*$ is an isometry between $H_d\Omega^{m}(M)$ and $H_\delta\Omega^{n-m}(M)$.
\end{proof}


\section{Helmholtz decompositions and compact embedding results}\label{section::well-posedness}
Let $(M,g)$ be a compact oriented $n$-dimensional Riemannian manifold with smooth boundary. Throughout  the section we assume that $\alpha\in L^\infty(M)$ such that $\Re\alpha\ge c$ for some constant $c>0$. 

\subsection{Helmholtz decompositions of $H_{d}\Omega^1(M)$, $H_{d,0}\Omega^1(M)$ and $L^2\Omega^1(M)$} For the proof of Theorem~\ref{thm::well posedness new version}, we will use Helmholtz type decomposition of $H_{d,0}\Omega^1(M)$ and $L^2\Omega^1(M)$ suitable for Maxwell's equations. For the proofs we closely follow \cite{Monk2003}, see also \cite{KirschHettlich2015}.\smallskip

Define the spaces
\begin{align*}
L^2\Omega^1(M)_{0,\alpha}:&=\{w\in L^2\Omega^1(M): (\alpha w|dh)_{L^2\Omega^1(M)}=0,\,\, h\in H^1_0(M)\},\\
H_{d}\Omega^1(M)_\alpha:&=\{w\in H_{d}\Omega^1(M): (\alpha w|\varphi)_{L^2\Omega^1(M)}=0,\,\, \varphi\in H_{d}(0,\Omega^1(M))\},\\
H_{d,0}\Omega^1(M)_{0,\alpha}:&=\{w\in H_{d,0}\Omega^1(M): (\alpha w|dh)_{L^2\Omega^1(M)}=0,\,\, h\in H^1_0(M)\}.
\end{align*}
\begin{Proposition}\label{prop::Helmholtz decomposition of H_{d,0}}
The space $dH_0^1(M)=\{dh\in L^2\Omega^1(M):h\in H^1_0(M)\}$ is closed in $L^2\Omega^1(M)$ and in $H_{d,0}\Omega^1(M)$, and the following orthogonal decompositions hold
\begin{align}
L^2\Omega^1(M)&=L^2\Omega^1(M)_{0,\alpha}\oplus dH_0^1(M),\label{eqn::decomposition of L^2}\\
H_{d,0}\Omega^1(M)&=H_{d,0}\Omega^1(M)_{0,\alpha}\oplus dH_0^1(M),\label{eqn::decomposition of H_d0}
\end{align}
where all of the projection operators are bounded. Moreover, the projection of $H_{d,0}\Omega^1(M)$ onto $H_{d,0}\Omega^1(M)_{0,\alpha}$ is the restriction of the projection of $L^2\Omega^1(M)$ onto $L^2\Omega^1(M)_{0,\alpha}$.
\end{Proposition}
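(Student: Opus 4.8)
The plan is to produce both decompositions from one scalar elliptic problem solved by the Lax--Milgram theorem. First I would introduce the sesquilinear form
$$
B(p,q)=(\alpha\,dp\,|\,dq)_{L^2\Omega^1(M)},\qquad p,q\in H^1_0(M).
$$
Since $\alpha\in C^2(M)$ is bounded, $B$ is bounded on $H^1_0(M)\times H^1_0(M)$, and since $\Re\alpha>0$ on $M$ we have $\Re B(p,p)=\int_M(\Re\alpha)\,|dp|_g^2\,d\Vol_g\gtrsim\|dp\|_{L^2\Omega^1(M)}^2\gtrsim\|p\|_{H^1_0(M)}^2$ by the Poincar\'e inequality on $H^1_0(M)$. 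For $w\in L^2\Omega^1(M)$ the antilinear functional $q\mapsto(\alpha w\,|\,dq)_{L^2\Omega^1(M)}$ is continuous on $H^1_0(M)$, so Lax--Milgram yields a unique $p_w\in H^1_0(M)$ with $B(p_w,q)=(\alpha w\,|\,dq)_{L^2\Omega^1(M)}$ for all $q\in H^1_0(M)$ and $\|p_w\|_{H^1_0(M)}\lesssim\|w\|_{L^2\Omega^1(M)}$; the map $w\mapsto p_w$ is linear. Setting $Pw=dp_w$ and $w_0=w-Pw$, one has $(\alpha w_0\,|\,dq)_{L^2\Omega^1(M)}=0$ for all $q\in H^1_0(M)$, i.e. $w_0\in L^2\Omega^1(M)_{0,\alpha}$, so $w=w_0+Pw$ with both summands bounded linear functions of $w$; this gives \eqref{eqn::decomposition of L^2} once directness is verified.

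Directness and closedness are short. If $w\in L^2\Omega^1(M)_{0,\alpha}\cap dH^1_0(M)$, write $w=dp$ and test with $q=p$ to get $\Re B(p,p)=0$, hence $dp=0$ and $w=0$. The estimate $\|p\|_{H^1_0(M)}\lesssim\|dp\|_{L^2\Omega^1(M)}$ shows that a Cauchy sequence $dp_k$ in $L^2\Omega^1(M)$ comes from a Cauchy sequence $p_k$ in $H^1_0(M)$, so $dp_k\to dp$; thus $dH^1_0(M)$ is closed in $L^2\Omega^1(M)$. Since $d(dp)=0$, the $H_d$-norm coincides with the $L^2$-norm on $dH^1_0(M)$, and $\mathbf t(dp)=0$ because $p\in H^1_0(M)$ (approximate $p$ by $C^\infty_0(M^{\rm int})$ functions and use continuity of $\mathbf t$ on $H_d\Omega^1(M)$ from Section~\ref{section::on H_d and H_delta spaces}); hence $dH^1_0(M)\subset H_{d,0}\Omega^1(M)$ is closed there as well.

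For \eqref{eqn::decomposition of H_d0} I would reuse the same $p_w$: given $w\in H_{d,0}\Omega^1(M)$, set $w_0=w-dp_w$. Then $dw_0=dw\in L^2\Omega^2(M)$ and $\mathbf t(w_0)=\mathbf t(w)-\mathbf t(dp_w)=0$, so $w_0\in H_{d,0}\Omega^1(M)$; moreover $(\alpha w_0\,|\,dq)_{L^2\Omega^1(M)}=0$ for all $q\in H^1_0(M)$, so $w_0\in H_{d,0}\Omega^1(M)_{0,\alpha}$, with $\|w_0\|_{H_d\Omega^1(M)}^2=\|w_0\|_{L^2\Omega^1(M)}^2+\|dw\|_{L^2\Omega^2(M)}^2\lesssim\|w\|_{H_d\Omega^1(M)}^2$. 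Directness follows from $H_{d,0}\Omega^1(M)_{0,\alpha}\cap dH^1_0(M)\subset L^2\Omega^1(M)_{0,\alpha}\cap dH^1_0(M)=\{0\}$. Since the projector here is literally $w\mapsto dp_w$ with $p_w$ determined by the same variational problem, it is the restriction of the $L^2$-projector, which is the final assertion.

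The only point needing care is that $\alpha$ is complex-valued, so coercivity must be stated for $\Re B$ rather than $|B|$, and the ``orthogonality'' in the statement is that with respect to the sesquilinear pairing $(w,v)\mapsto(\alpha w\,|\,v)_{L^2\Omega^1(M)}$ rather than the $L^2$-inner product; everything else (the trace estimate on $H_d$, the Poincar\'e inequality) is routine, and I do not expect a genuine obstacle.
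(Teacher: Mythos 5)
Your proposal is correct and follows essentially the same route as the paper: the same Lax--Milgram argument with the sesquilinear form $(\alpha\,d\cdot\,|\,d\cdot)_{L^2\Omega^1(M)}$ (you parametrize it by $p\in H^1_0(M)$ where the paper works on $dH^1_0(M)$, which are identified via the Poincar\'e inequality), the same Poincar\'e-based closedness argument, and the same observation that the one projector $w\mapsto dp_w$ serves both decompositions. Your explicit verification of directness and your remark that the orthogonality is with respect to the weighted pairing $(\alpha\cdot|\cdot)_{L^2\Omega^1(M)}$ are correct refinements of points the paper passes over quickly.
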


\begin{proof}
To prove closedness of $dH_0^1(M)$ in $L^2\Omega^1(M)$, consider a sequence $\{h_k\}_{k=1}^\infty\subset H^1_0(M)$ such that $\|dh_k-u\|_{L^2\Omega^1(M)}\to 0$ as $k\to\infty$ for some $u\in L^2\Omega^1(M)$. In particular, $\{dh_k\}_{k=1}^\infty$ is a Cauchy sequence in $L^2\Omega^1(M)$. Then by Poincar\'e inequality, $\{h_k\}_{k=1}^\infty$ is a Cauchy sequence in $L^2(M)$. Hence, $\{h_k\}_{k=1}^\infty$ is a Cauchy sequence in $H^1(M)$. Therefore, $u=dh$ for some $h\in H^1(M)$. Finally, by closedness of $H_0^1(M)$ in $H^1(M)$, we have $h\in H^1_0(M)$.\smallskip

Next, to prove closedness of $dH_0^1(M)$ in $H_{d,0}\Omega^1(M)$, consider a sequence $\{h_k\}_{k=1}^\infty\subset H^1_0(M)$ such that $\|dh_k-u\|_{L^2\Omega^1(M)}\to 0$ as $k\to\infty$ for some $u\in H_{d,0}\Omega^1(M)$. In particular, $\{dh_k\}_{k=1}^\infty$ is a Cauchy sequence in $L^2\Omega^1(M)$. Since $dh_k=0$ for all $k\ge 1$, $\{dh_k\}_{k=1}^\infty$ is a Cauchy sequence in $L^2(M)$. 
Therefore, $u=dh$ for some $h\in H^1(M)$. Finally, by closedness of $dH_0^1(M)$ in $L^2\Omega^1(M)$, we have $h\in H^1_0(M)$.\smallskip

To prove (\ref{eqn::decomposition of L^2} -- \ref{eqn::decomposition of H_d0}), consider the sesquilinear form $A$ on $dH_{0}^1(M)$ defined as
$$
A(dh,dh')=(\alpha dh|dh')_{L^2\Omega^1(M)},\quad h,h'\in H_0^1(M).
$$
It is clear that
$$
|A(dh,dh')|\le C\|dh\|_{L^2\Omega^1(M)}\|dh'\|_{L^2\Omega^1(M)}
$$
and that 
$$
\Re A(dh,dh)=(\Re(\alpha) dh|dh)_{L^2\Omega^1(M)} \ge c\|dh\|_{L^2\Omega^1(M)}^2.
$$
Thus, the form $A$ is strictly coercive on $dH_{0}^1(M)$. For a given $e\in L^2\Omega^1(M)$, consider the bounded linear functional $\ell_e:dH^1_0(M)\to \C$ defined as
$$
\ell_e(dh')=(\alpha e|dh')_{L^2\Omega^1(M)}.
$$
Applying the Lax-Milgram's lemma (see~e.g.~\cite[Lemma~2.21]{Monk2003}), we obtain a bounded linear operator $G:L^2\Omega^1(M)\to H_0^1(M)$ such that
$$
\ell_e(dh')=A(Ge,dh'),\quad e\in L^2\Omega^1(M),\quad h'\in H_0^1(M).
$$
This implies that
\begin{equation}\label{eqn::e-dGe is in H_{d,0,varepsilon} in a weak sense}
(\alpha(e-dGe)|dh')_{L^2\Omega^1(M)}=0,\quad h'\in H_0^1(M),
\end{equation}
and hence $e-dGe\in L^2\Omega^1(M)_{0,\alpha}$.\smallskip



Thus, we can claim that every $e\in L^2\Omega^1(M)$ can be uniquely decomposed as $e=e_0+dh$ where $e_0=(e-dGe)\in L^2\Omega^1(M)_{0,\alpha}$ and $h=Ge\in H^1_0(M)$. Hence, we have shown~\eqref{eqn::decomposition of L^2}.\smallskip

If $e\in H_{d,0}\Omega^1(M)$, then $e_0=e-dGe\in H_{d,0}\Omega^1(M)$ since
$$
\mathbf t(e_0)=\mathbf t(e)-\mathbf t(dGe)=-d_{\p M}(Ge)|_{\p M}=0.
$$
From \eqref{eqn::e-dGe is in H_{d,0,varepsilon} in a weak sense} we also can see that $e_0 \in H_{d,0}\Omega^1(M)_{0,\alpha}$. This gives the decomposition~\eqref{eqn::decomposition of H_d0}.
\end{proof}

It is easy to see that closedness of $dH_0^1(M)$ in $L^2\Omega^1(M)$ imply closedness of the former in $H_d\Omega^1(M)$. Moreover, the sesquilinear form $A$ in the proof of Proposition~\ref{prop::Helmholtz decomposition of H_{d,0}} can be defined on $H_{d}(0,\Omega^1(M))$; see the definition of the latter space below. The same is true for the linear functional $\ell_e$. Furthermore, the latter makes sense even for $e\in L^2\Omega^1(M)$. Therefore, the similar arguments, but $L^2\Omega^1(M)$ replaced by $H_{d,0}\Omega^1(M)$ and $dH_0^1(M)$ replaced by $H_{d}(0,\Omega^1(M))$, imply the following result.

\begin{Proposition}\label{prop::Helmholtz decomposition of H_{d}}
The space $dH_0^1(M)$ is closed in $H_{d}\Omega^1(M)$ and the following orthogonal decomposition holds
\begin{equation}\label{eqn::decomposition of H_d}
H_d\Omega^1(M)=H_{d}\Omega^1(M)_\alpha\oplus H_{d}(0,\Omega^1(M)),
\end{equation}
where
$$
H_{d}(0,\Omega^1(M))=\{\varphi\in H_d\Omega^1(M):d\varphi=0\}
$$
and all of the projection operators are bounded.
\end{Proposition}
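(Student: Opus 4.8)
The plan is to repeat the proof of Proposition~\ref{prop::Helmholtz decomposition of H_{d,0}}, with the subspace $dH_0^1(M)$ replaced by $H_{d}(0,\Omega^1(M))$ and the ambient space $L^2\Omega^1(M)$ replaced by $H_d\Omega^1(M)$. First I would observe that $H_{d}(0,\Omega^1(M))$ is the kernel of the bounded map $d\colon H_d\Omega^1(M)\to L^2\Omega^2(M)$, hence a closed subspace of $H_d\Omega^1(M)$ and in particular a Hilbert space; moreover on it the $H_d$-norm and the $L^2$-norm agree, since $d\varphi=0$ there. Closedness of $dH_0^1(M)$ in $H_d\Omega^1(M)$ is then immediate: if $dh_k\to u$ in the $H_d$-norm then $dh_k\to u$ in $L^2\Omega^1(M)$, so by the $L^2$-closedness proved in Proposition~\ref{prop::Helmholtz decomposition of H_{d,0}} one has $u=dh$ with $h\in H_0^1(M)$, and $d(dh)=0$ gives $u\in dH_0^1(M)$.

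For the splitting, consider the sesquilinear form $A(\varphi,\varphi')=(\alpha\varphi|\varphi')_{L^2\Omega^1(M)}$ on $H_{d}(0,\Omega^1(M))$; it is bounded and, since $\Re\alpha\ge c>0$ on the compact manifold $M$, strictly coercive in the $H_d$-topology, because $\Re A(\varphi,\varphi)=(\Re(\alpha)\varphi|\varphi)_{L^2}\ge c\|\varphi\|_{L^2\Omega^1(M)}^2=c\|\varphi\|_{H_d\Omega^1(M)}^2$. For $e\in H_d\Omega^1(M)$ (indeed already for $e\in L^2\Omega^1(M)$) the functional $\ell_e(\varphi')=(\alpha e|\varphi')_{L^2\Omega^1(M)}$ is bounded on $H_{d}(0,\Omega^1(M))$, so the Lax--Milgram lemma gives a bounded operator $G\colon H_d\Omega^1(M)\to H_{d}(0,\Omega^1(M))$ with $A(Ge,\varphi')=\ell_e(\varphi')$ for all $\varphi'\in H_{d}(0,\Omega^1(M))$. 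Then $(\alpha(e-Ge)|\varphi')_{L^2\Omega^1(M)}=0$ for every such $\varphi'$, i.e. $e-Ge\in H_d\Omega^1(M)_\alpha$, and $e=(e-Ge)+Ge$ is the asserted decomposition, with the projections $e\mapsto e-Ge$ and $e\mapsto Ge$ bounded.

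To see that the sum is direct---which is also the sense in which the decomposition is ``orthogonal'', namely with respect to the weighted pairing $(\alpha\,\cdot\,|\,\cdot\,)_{L^2}$, exactly as in Proposition~\ref{prop::Helmholtz decomposition of H_{d,0}}---suppose $w\in H_d\Omega^1(M)_\alpha\cap H_{d}(0,\Omega^1(M))$. Taking $\varphi=w$ in the defining relation of $H_d\Omega^1(M)_\alpha$ gives $\int_M\alpha|w|_g^2\,d\Vol_g=0$, hence $\int_M\Re(\alpha)|w|_g^2\,d\Vol_g=0$, hence $w=0$. Closedness of $H_d\Omega^1(M)_\alpha$ follows from its being the range of the bounded projection $e\mapsto e-Ge$, or directly from its description as an intersection of kernels of continuous functionals on $H_d\Omega^1(M)$.

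The whole argument is a transcription of the earlier one, so I do not expect a real obstacle; the only point deserving care is that the coercivity of $A$ must be read in the $H_d$-norm, which is legitimate precisely because that norm reduces to the $L^2$-norm on closed forms.
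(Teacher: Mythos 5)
Your proposal is correct and follows essentially the same route the paper indicates: it transcribes the Lax--Milgram argument of Proposition~\ref{prop::Helmholtz decomposition of H_{d,0}} to the pair $(H_d\Omega^1(M),\,H_{d}(0,\Omega^1(M)))$, with the key observation (which the paper leaves implicit) that the $H_d$-norm coincides with the $L^2$-norm on closed forms, so the form $A$ remains coercive. No gaps.
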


\subsection{Compact embedding results}

We will also need the following results on compact embedding of $H_{d,0}\Omega^1(M)\cap H_\delta\Omega^1(M)$ and $H_{d,0}\Omega^1(M)_{0,\alpha}$ into $L^2\Omega^1(M)$.

\begin{Proposition}\label{corol::compact embedding of H_{d,delta,0} into L^2}
The inclusion $H_{d,0}\Omega^1(M)\cap H_\delta\Omega^1(M)\inclusion L^2\Omega^1(M)$ is compact
\end{Proposition}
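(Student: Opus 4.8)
The plan is to reduce the statement to the classical Rellich--Kondrachov compactness theorem by invoking Proposition~\ref{prop::bounded embedding of H_{d,delta} with regular boundary value into H^1}. The point is that a $1$-form lying in $H_{d,0}\Omega^1(M)\cap H_\delta\Omega^1(M)$ automatically has $H^1$ regularity, with a uniform bound. Indeed, if $u\in H_{d,0}\Omega^1(M)\cap H_\delta\Omega^1(M)$ then $\mathbf{t}(u)=0$, so in particular $\mathbf{t}(u)\in H^{1/2}\Omega^1(\p M)$ with $\|\mathbf{t}(u)\|_{H^{1/2}\Omega^1(\p M)}=0$. Applying Proposition~\ref{prop::bounded embedding of H_{d,delta} with regular boundary value into H^1} with $m=1$ yields $u\in H^1\Omega^1(M)$ together with the estimate
$$
\|u\|_{H^1\Omega^1(M)}\le C\big(\|u\|_{H_d\Omega^1(M)}+\|\delta u\|_{L^2\Omega^0(M)}\big),
$$
whose right-hand side is, up to a constant, the natural norm of $u$ in $H_{d,0}\Omega^1(M)\cap H_\delta\Omega^1(M)$. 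Thus the inclusion $H_{d,0}\Omega^1(M)\cap H_\delta\Omega^1(M)\inclusion H^1\Omega^1(M)$ is a bounded linear map.

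Next I would invoke the Rellich--Kondrachov theorem for differential forms on the compact manifold $M$ with smooth boundary: the inclusion $H^1\Omega^1(M)\inclusion L^2\Omega^1(M)$ is compact. This follows from the scalar Rellich theorem by choosing a finite coordinate atlas and a subordinate partition of unity, applying the scalar compactness coefficientwise, and using that the intrinsic $H^1\Omega^1$-norm is equivalent to the sum of the $H^1$-norms of the local coefficients. Composing the bounded map of the previous paragraph with this compact inclusion shows that $H_{d,0}\Omega^1(M)\cap H_\delta\Omega^1(M)\inclusion L^2\Omega^1(M)$ is compact, which is the assertion. There is essentially no obstacle here: all the analytic content has already been packaged into Proposition~\ref{prop::bounded embedding of H_{d,delta} with regular boundary value into H^1}, which promotes $H_d\cap H_\delta$ regularity together with a vanishing tangential trace to full $H^1$ regularity with a uniform estimate, and once one is inside $H^1\Omega^1(M)$ the compactness is classical.
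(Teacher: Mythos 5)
Your argument is correct and is exactly the paper's proof: apply Proposition~\ref{prop::bounded embedding of H_{d,delta} with regular boundary value into H^1} (with $\mathbf t(u)=0$) to get a bounded inclusion into $H^1\Omega^1(M)$, then compose with the compact embedding $H^1\Omega^1(M)\inclusion L^2\Omega^1(M)$.
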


\begin{proof}
Follows from Proposition~\ref{prop::bounded embedding of H_{d,delta} with regular boundary value into H^1} and the compactness of the embedding
$$
H^1\Omega^1(M)\inclusion L^2\Omega^1(M),
$$
see e.g. \cite[Theorem~1.3.6]{Schwarz1995}.
\end{proof}

The following compact embedding result is originally due to Weber~\cite{Weber1980} in Euclidean case.

\begin{Proposition}\label{prop::compact embedding of H_{d,0,varepsilon} into L^2}
The inclusion $H_{d,0}\Omega^1(M)_{0,\alpha}\inclusion L^2\Omega^1(M)$ is compact.
\end{Proposition}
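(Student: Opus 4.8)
The plan is to reduce the claim to Proposition~\ref{corol::compact embedding of H_{d,delta,0} into L^2} by showing that the $\alpha$-orthogonality condition built into the definition of $H_{d,0}\Omega^1(M)_{0,\alpha}$ automatically upgrades a form $w$ in this space to an element of $H_{d,0}\Omega^1(M)\cap H_\delta\Omega^1(M)$, together with a bound on $\|\delta w\|_{L^2\Omega^0(M)}$ by $\|w\|_{H_d\Omega^1(M)}$.

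First I would unwind the defining condition. Let $w\in H_{d,0}\Omega^1(M)_{0,\alpha}$, so $(\alpha w\,|\,dh)_{L^2\Omega^1(M)}=0$ for every $h\in H^1_0(M)$. Since $C^\infty_0(M^{\rm int})$ is dense in $H^1_0(M)$ and $\delta$ is the formal adjoint of $d$, this says exactly that $\delta(\alpha w)=0$ in the sense of distributions in $M^{\rm int}$. Next I would use the Leibniz rule \eqref{eqn::divergence of fw} in its distributional form — obtained by mollifying $w$ and passing to the limit, which is legitimate since $\alpha\in C^2(M)$ — to write $\delta(\alpha w)=\alpha\,\delta w-i_{d\alpha}w$ as distributions. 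Because $\alpha$ has positive real part on the compact manifold $M$, the function $\alpha^{-1}$ is bounded and $d\alpha\in C^1\Omega^1(M)$ is bounded; hence $\delta(\alpha w)=0$ forces $\delta w=\alpha^{-1}i_{d\alpha}w\in L^2\Omega^0(M)$ with
$$
\|\delta w\|_{L^2\Omega^0(M)}\le C\|w\|_{L^2\Omega^1(M)}\le C\|w\|_{H_d\Omega^1(M)},
$$
where $C$ depends only on $\alpha$. In particular $w\in H_{d,0}\Omega^1(M)\cap H_\delta\Omega^1(M)$.

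This estimate shows that the inclusion $H_{d,0}\Omega^1(M)_{0,\alpha}\hookrightarrow H_{d,0}\Omega^1(M)\cap H_\delta\Omega^1(M)$ is bounded, the natural norm $\bigl(\|w\|_{H_d\Omega^1(M)}^2+\|\delta w\|_{L^2\Omega^0(M)}^2\bigr)^{1/2}$ of the target being controlled by $(1+C^2)^{1/2}\|w\|_{H_d\Omega^1(M)}$. Composing this bounded inclusion with the compact inclusion $H_{d,0}\Omega^1(M)\cap H_\delta\Omega^1(M)\hookrightarrow L^2\Omega^1(M)$ of Proposition~\ref{corol::compact embedding of H_{d,delta,0} into L^2}, and using that a bounded operator followed by a compact one is compact, we conclude that $H_{d,0}\Omega^1(M)_{0,\alpha}\hookrightarrow L^2\Omega^1(M)$ is compact; equivalently, every sequence bounded in $H_{d,0}\Omega^1(M)_{0,\alpha}$ has an $L^2$-convergent subsequence.

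The only point requiring care is the distributional Leibniz rule and the resulting regularity gain for $\delta w$; once that is in place the argument is immediate, so I do not anticipate a genuine obstacle — the entire content is the observation that the $\alpha$-divergence-free constraint buys $H_\delta\Omega^1(M)$-regularity, after which the earlier compact embedding does the rest.
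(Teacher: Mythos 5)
Your proof is correct, but it follows a genuinely different route from the paper's. You exploit the defining condition directly: $(\alpha w\,|\,dh)=0$ for all $h\in H^1_0(M)$ means $\delta(\alpha w)=0$ distributionally, and the Leibniz rule $\delta(\alpha w)=\alpha\,\delta w-i_{d\alpha}w$ then yields $\delta w=\alpha^{-1}i_{d\alpha}w\in L^2$ with $\|\delta w\|_{L^2}\lesssim\|w\|_{L^2}$, so the inclusion $H_{d,0}\Omega^1(M)_{0,\alpha}\hookrightarrow H_{d,0}\Omega^1(M)\cap H_\delta\Omega^1(M)$ is bounded and composing with Proposition~\ref{corol::compact embedding of H_{d,delta,0} into L^2} finishes the argument. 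The paper instead takes a bounded sequence $\{u_k\}$, splits each term via the Helmholtz decomposition \eqref{eqn::decomposition of H_d0} with weight $1$ as $u_k=u^1_{0,k}+dh^1_k$, applies the compact embedding only to the genuinely divergence-free parts $u^1_{0,k}\in H_{d,0}\Omega^1(M)_{0,1}$, and then transfers the $L^2$-convergence back to $u_k$ using the coercivity of the $\alpha$-weighted pairing. Your version is shorter and more quantitative, and the one point needing care — the distributional Leibniz rule for $w\in L^2$ — is handled correctly (for $\phi\in C^\infty_0(M^{\rm int})$ one writes $\langle w,d\phi\rangle=\langle\alpha w,d(\alpha^{-1}\phi)\rangle+\langle i_{d\alpha}w,\alpha^{-1}\phi\rangle$ and the first term vanishes by the defining condition, since $\alpha^{-1}\phi\in H^1_0(M)$). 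The trade-off is that your argument differentiates $\alpha$ and so needs $d\alpha\in L^\infty$, whereas the paper's decomposition argument only uses $\alpha\in L^\infty$ with $\Re\alpha$ bounded below, matching the Lax--Milgram framework of Proposition~\ref{prop::Helmholtz decomposition of H_{d,0}}; under the standing hypothesis $\alpha\in C^2(M)$ with positive real part, both are equally valid.
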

\begin{proof}
We prove this result following \cite[Proposition~2.28]{CaorsiFernandesRaffetto2000}. Consider a bounded sequence $\{u_k\}_{k=1}^\infty\subset H_{d,0}\Omega^1(M)_{0,\alpha}$. Using the Helmholtz decomposition in \eqref{eqn::decomposition of H_d0} for $\alpha=1$, we can write each $u_k$ uniquely as $u_k=u_{0,k}^1+dh^1_k$, where $u_{0,k}^1\in H_{d,0}\Omega^1(M)_{0,1}$ and $h^1_k\in H^1_0(M)$. Since $(u_k|dh^1_k)_{L^2\Omega^1(M)}=(dh^1_k|dh^1_k)_{L^2\Omega^1(M)}$, we have $\|dh^1_k\|_{H_d\Omega^1(M)}\le \|u_k\|_{H_d\Omega^1(M)}$ and hence
$$
\|u^1_{0,k}\|_{H_d\Omega^1(M)}\le C \|u_k\|_{H_d\Omega^1(M)}.
$$
Thus, the sequence $\{u_{0,k}^1\}_{k=1}^\infty\subset H_{d,0}\Omega^1(M)_{0,1}$ is bounded. Since $H_{d,0}\Omega^1(M)_{0,1}\subset H_{d,0}\Omega^1(M)\cap H_\delta\Omega^1(M)$, Proposition~\ref{corol::compact embedding of H_{d,delta,0} into L^2} implies that there is $u\in L^2\Omega^1(M)$ and a subsequence $\{u_{0,k'}^1\}_{k'=1}^\infty$ such that
\begin{equation}\label{eqn::estimate u-u_{0,k'}^1 in L^2}
\|u-u_{0,k'}^1\|_{L^2\Omega^1(M)}\to 0\quad\text{as}\quad k'\to\infty.
\end{equation}
Now, using the Helmholtz decomposition in \eqref{eqn::decomposition of L^2}, we can write $u$ uniquely as $u=u^\alpha+dh^\alpha$, where $u^\alpha\in L^2\Omega^1(M)_{0,\alpha}$ and $h^\alpha\in H^1_0(M)$. Then
\begin{align*}
(\alpha(u^\alpha-u_{k'})|(u^\alpha-u_{k'}))_{L^2\Omega^1(M)}&=(\alpha(u^\alpha-u_{k'})|(u^\alpha+dh^\alpha-u_{k'}+dh_{k'}^1))_{L^2\Omega^1(M)}\\
&=(\alpha(u^\alpha-u_{k'})|(u-u_{0,k'}^1))_{L^2\Omega^1(M)}.
\end{align*}
Together with \eqref{eqn::estimate u-u_{0,k'}^1 in L^2} this gives that
$$
\|u^\alpha-u_{k'}\|_{L^2\Omega^1(M)}\le C\|u-u_{0,k'}^1\|_{L^2\Omega^1(M)}\to 0\quad\text{as}\quad k'\to\infty.
$$
Thus, the subsequence $\{u_{k'}\}_{k'=1}^\infty$ converges to $u^\alpha$ in $L^2\Omega^1(M)$. The proof is complete.
\end{proof}

\section{Proof of Theorem~\ref{thm::well posedness new version}}\label{section::proof of thm2}

For the proof, we follow the standard variational-methods used in \cite{Costabel1991,KirschHettlich2015,Leis1968,Monk2003}. Substituting the second equation of \eqref{eqn::Maxwell in appendix} into the first equation of \eqref{eqn::Maxwell in appendix}, we obtain the following second-order equation
\begin{equation}\label{eqn::second order equation}
\delta(\mu^{-1}dE)-\omega^2\varepsilon E=i\omega J_e+*d(\mu^{-1}J_m).
\end{equation}
If we find a unique solution $E\in H_{d,0}\Omega^1(M)$ of this equation satisfying
$$
\|E\|_{H_{d}\Omega^1(M)}\le C(\|J_e\|_{L^2\Omega^1(M)}+\|J_m\|_{L^2\Omega^1(M)}),
$$
defining $H=-i\omega^{-1}\mu^{-1}(*dE-J_m)$ we obtain a unique $(E,H)\in H_{d,0}\Omega^1(M)\times H_{d}\Omega^1(M)$ solving the Maxwell equations \eqref{eqn::Maxwell in appendix} and hence satisfying
$$
\|E\|_{H_{d}\Omega^1(M)}+\|H\|_{H_{d}\Omega^1(M)}\le C(\|J_e\|_{L^2\Omega^1(M)}+\|J_m\|_{L^2\Omega^1(M)}).
$$

Therefore, the problem is reduced to finding a unique $E\in H_{d,0}\Omega^1(M)$ such that
\begin{equation}\label{eqn::second order equation in a weak form}
\begin{aligned}
(\mu^{-1}dE|de')_{L^2\Omega^2(M)}-(&\omega^2\varepsilon E|e')_{L^2\Omega^1(M)}\\
&=(i\omega J_e|e')_{L^2\Omega^1(M)}+(\mu^{-1}*J_m|de')_{L^2\Omega^2(M)}
\end{aligned}
\end{equation}
for all $e'\in H_{d,0}\Omega^1(M)$.\smallskip

Using \eqref{eqn::decomposition of H_d0}, we can decompose $E$ uniquely as $E=E_0+dh$, where $E_0\in H_{d,0}\Omega^1(M)_{0,\varepsilon}$ and $h\in H^1_0(M)$. Since $i\omega\varepsilon^{-1}J_e\in L^2\Omega^1(M)$, this can be uniquely decomposed as $i\omega\varepsilon^{-1}J_e=J_{e,0}+dj_e$, where $J_{e,0}\in L^2\Omega^1(M)_{0,\varepsilon}$ and $j_e\in H^1_0(M)$. We note here that
\begin{equation}\label{eqn::estimate for j_e in H^1 norm}
\|j_e\|_{H^1(M)}\le C\|J_e\|_{L^2\Omega^1(M)}.
\end{equation}
Using these decompositions, \eqref{eqn::second order equation in a weak form} can be written as
\begin{equation}\label{eqn::second order equation in a weak form -- rewritten}
\begin{aligned}
(\mu^{-1}dE_0|&de')_{L^2\Omega^2(M)}-(\omega^2\varepsilon E_0|e')_{L^2\Omega^1(M)}-(\omega^2\varepsilon dh|e')_{L^2\Omega^1(M)}\\
&=(\varepsilon J_{e,0}|e')_{L^2\Omega^1(M)}+(\varepsilon dj_e|e')_{L^2\Omega^1(M)}+(\mu^{-1}*J_m|de')_{L^2\Omega^2(M)}
\end{aligned}
\end{equation}
for all $e'\in H_{d,0}\Omega^1(M)$.\smallskip

Our first step is to extract $h$ from \eqref{eqn::second order equation in a weak form -- rewritten}. For this, use $e'=dh'$ for arbitrary $h'\in H^1_0(M)$ in \eqref{eqn::second order equation in a weak form -- rewritten}. Since $E_0\in H_{d,0}\Omega^1(M)_{0,\varepsilon}$ and $J_{e,0}\in L^2\Omega^1(M)_{0,\varepsilon}$, we obtain
$$
-(\omega^2\varepsilon dh|dh')_{L^2\Omega^1(M)}=(\varepsilon dj_e|dh')_{L^2\Omega^1(M)}
$$
for all $h'\in H_0^1(M)$. We rewrite this as
$$
(\varepsilon d(\omega^2h+j_e)|dh')_{L^2\Omega^1(M)}=0
$$
and take $h'=\omega^2h+j_e$. Then we obtain $h'=0$, which implies that $h=-\omega^{-2}j_e$.\smallskip

Now, we use $h=-\omega^{-2}j_e$ in \eqref{eqn::second order equation in a weak form -- rewritten} and get
\begin{align*}
(\mu^{-1}dE_0|de')_{L^2\Omega^2(M)}-(\omega^2&\varepsilon E_0|e')_{L^2\Omega^1(M)}\\
&=(\varepsilon J_{e,0}|e')_{L^2\Omega^1(M)}+(\mu^{-1}*J_m|de')_{L^2\Omega^2(M)}
\end{align*}
for all $e'\in H_{d,0}\Omega^1(M)$. Thus, our next step is to find a unique $E_0\in H_{d,0}\Omega^1(M)_{0,\varepsilon}$ satisfying
\begin{equation}\label{eqn::second order equation -- rewritten}
\delta(\mu^{-1}dE_0)-\omega^2\varepsilon E_0=\varepsilon J_{e,0}+\delta(\mu^{-1}*J_m).
\end{equation}
To solve this equation, we need the following result on existence of a solution operator
\begin{Proposition}\label{prop::resonances}
There are a constant $\lambda>0$ and a bounded linear map $T_\lambda:(H_{d,0}\Omega^1(M))'\to H_{d,0}\Omega^1(M)$ such that
\begin{equation}\label{eqn::T_lambda is an inverse of a certain PDoperator}
\delta(\mu^{-1}dT_\lambda u)+\lambda\varepsilon T_\lambda u=u,\quad u\in (H_{d,0}\Omega^1(M))'
\end{equation}
and
$$
T_\lambda(\delta(\mu^{-1}de)+\lambda \varepsilon e)=e,\quad e\in H_{d,0}\Omega^1(M).
$$
Further, if $\<u,dh'\>_M=0$ for all $h'\in H^1_0(M)$, then $T_\lambda u\in H_{d,0}\Omega^1(M)_{0,\varepsilon}$. Moreover, if $\varepsilon$ and $\mu$ are positive, then $T_\lambda|_{L^2\Omega^1(M)}$ is self-adjoint with respect to the $L^2\Omega^1(M)$-inner product.
\end{Proposition}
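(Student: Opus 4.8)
The plan is to construct the operator $T_\lambda$ by a standard Lax--Milgram argument applied to a bilinear form that has been made coercive by adding the term $\lambda\varepsilon\cdot$, working on the Helmholtz-decomposed space. First I would fix $\lambda>0$ large and define on $H_{d,0}\Omega^1(M)$ the sesquilinear form
$$
a_\lambda(e,e')=(\mu^{-1}de|de')_{L^2\Omega^2(M)}+\lambda(\varepsilon e|e')_{L^2\Omega^1(M)}.
$$
Boundedness is immediate since $\mu^{-1},\varepsilon\in C^2(M)$. For coercivity the issue is that $\|de\|_{L^2}$ alone does not control $\|e\|_{H_d}$ on all of $H_{d,0}\Omega^1(M)$ (gradients $dh$ have $d(dh)=0$), but $\lambda\|e\|_{L^2}^2$ plus $\|de\|_{L^2}^2$ does control the full $H_d$-norm once we note $\Re(\mu^{-1})$ and $\Re(\varepsilon)$ are bounded below by a positive constant on the compact $M$ (their real parts are positive and continuous). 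Hence for $\lambda$ large enough, $\Re a_\lambda(e,e)\gtrsim \|e\|_{H_d\Omega^1(M)}^2$. By Lax--Milgram there is a bounded $T_\lambda:(H_{d,0}\Omega^1(M))'\to H_{d,0}\Omega^1(M)$ with $a_\lambda(T_\lambda u,e')=\langle u,e'\rangle_M$ for all $e'\in H_{d,0}\Omega^1(M)$; integrating by parts (Proposition~\ref{prop::tangential trace operator is extended to H_d}) this is exactly the weak form of $\delta(\mu^{-1}dT_\lambda u)+\lambda\varepsilon T_\lambda u=u$, and since $a_\lambda$ is the weak form of the operator $e\mapsto \delta(\mu^{-1}de)+\lambda\varepsilon e$ on $H_{d,0}\Omega^1(M)$, the two composition identities $T_\lambda(\delta(\mu^{-1}de)+\lambda\varepsilon e)=e$ and the displayed PDE follow by uniqueness in Lax--Milgram.

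Next I would check the invariance statement. If $\langle u,dh'\rangle_M=0$ for all $h'\in H^1_0(M)$, then testing the weak equation for $T_\lambda u$ against $e'=dh'$ gives $(\mu^{-1}d T_\lambda u|d(dh'))_{L^2}+\lambda(\varepsilon T_\lambda u|dh')_{L^2}=\langle u,dh'\rangle_M=0$; since $d(dh')=0$ this reads $(\varepsilon T_\lambda u|dh')_{L^2}=0$ for all $h'\in H^1_0(M)$, which is precisely the membership $T_\lambda u\in H_{d,0}\Omega^1(M)_{0,\varepsilon}$. For self-adjointness of $T_\lambda|_{L^2\Omega^1(M)}$ when $\varepsilon,\mu$ are real and positive: for $f,g\in L^2\Omega^1(M)$ put $e=T_\lambda f$, $e'=T_\lambda g\in H_{d,0}\Omega^1(M)$ and compute $(T_\lambda f|g)_{L^2}=\overline{\langle g,e\rangle_M}$... more cleanly, $(f|T_\lambda g)_{L^2}=a_\lambda(T_\lambda f,T_\lambda g)$ by the defining identity with $u=f$, $e'=T_\lambda g$, and this expression is symmetric (Hermitian) in $(f,g)$ because $\mu^{-1}$ and $\varepsilon$ are real-valued, so $\overline{a_\lambda(e,e')}=a_\lambda(e',e)$; hence $(f|T_\lambda g)_{L^2}=\overline{(g|T_\lambda f)_{L^2}}=(T_\lambda f|g)_{L^2}$.

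The main obstacle, such as it is, is verifying coercivity on the correct space with the correct constants — one has to be careful that it is the \emph{full} $H_d\Omega^1(M)$-norm that is controlled, not merely a seminorm, and this is exactly why the lower-order term $\lambda\varepsilon\cdot$ (with $\lambda$ chosen after the ellipticity constants of $\Re\mu^{-1}$ are fixed) is present. A secondary point worth stating carefully is the meaning of $(H_{d,0}\Omega^1(M))'$ and the pairing $\langle\cdot,\cdot\rangle_M$: the Riesz-type representative furnished by Lax--Milgram lands in $H_{d,0}\Omega^1(M)$ because the form is coercive there, and the integration-by-parts identification of the weak equation with the stated PDE uses that test forms $e'\in H_{d,0}\Omega^1(M)$ have $\mathbf t(e')=0$ so no boundary term appears (Proposition~\ref{prop::tangential trace operator is extended to H_d}). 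Everything else is routine bookkeeping.
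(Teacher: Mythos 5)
Your proposal is correct and follows essentially the same route as the paper: Lax--Milgram applied to the coercive form $(\mu^{-1}de|de')_{L^2\Omega^2(M)}+\lambda(\varepsilon e|e')_{L^2\Omega^1(M)}$ on $H_{d,0}\Omega^1(M)$, with the invariance statement obtained by testing against $e'=dh'$ and self-adjointness from the Hermitian symmetry of the form when $\varepsilon,\mu$ are real. The paper phrases coercivity via a G\aa rding-type inequality $\Re B(e,e)\ge c_0\|e\|_{H_d}^2-C_0\|e\|_{L^2}^2$ and then chooses $\lambda\ge C_0/\min_M\Re(\varepsilon)$, but this is only a cosmetic difference from your argument.
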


Here and in what follows, $\<\cdot,\cdot\>_{M}$ is the duality between $(H_{d,0}\Omega^1(M))'$ and $H_{d,0}\Omega^1(M)$ naturally extending the $L^2\Omega^1(M)$-inner product.

\begin{proof}
Consider the bilinear form on $H_{d,0}\Omega^1(M)$
$$
B(e,e'):=(\mu^{-1}de|de')_{L^2\Omega^2(M)},\quad e,e'\in H_{d,0}\Omega^1(M).
$$
Then
$$
|B(e,e')|\le C\|e\|_{H_{d,0}\Omega^1(M)}\|e'\|_{H_{d,0}\Omega^1(M)}.
$$
It is also easy to see that
$$
\Re B(e,e)\ge C_0\|de\|_{L^2\Omega^2(M)}^2\ge c_0\|e\|_{H_d\Omega^1(M)}^2-C_0\|e\|_{L^2\Omega^1(M)}^2
$$
for some constants $c_0,C_0>0$ independent of $e$. Thus, there is constant $\lambda>0$ such that the form $B(e,e')+(\lambda \varepsilon e|e')_{L^2\Omega^1(M)}$ is strictly coercive on $H_{d,0}\Omega^1(M)$. In fact, we can take $\lambda>0$ satisfying $\lambda\ge C_0/\min_M\Re(\varepsilon)$. Applying the Lax-Milgram's lemma, we obtain a bounded linear operator $T_\lambda:(H_{d,0}\Omega^1(M))'\to H_{d,0}\Omega^1(M)$ such that
\begin{equation}\label{eqn::T_lambda in a weak sense}
(\mu^{-1}dT_\lambda u|de')_{L^2\Omega^2(M)}+(\lambda \varepsilon T_\lambda u|e')_{L^2\Omega^1(M)}=\<u,e'\>_{M}
\end{equation}
for all $u\in (H_{d,0}\Omega^1(M))'$ and $e'\in H_{d,0}\Omega^1(M)$, where $\<\cdot,\cdot\>_{M}$ is the duality between $(H_{d,0}\Omega^1(M))'$ and $H_{d,0}\Omega^1(M)$. Thus, $T_\lambda$ is the operator which maps $u\in (H_{d,0}\Omega^1(M))'$ to the unique solution $e\in H_{d,0}\Omega^1(M)$ of $\delta(\mu^{-1}de)+\lambda\varepsilon e=u$.\smallskip

In particular, if $\<u,dh'\>_{M}=0$ for all $h'\in H^1_0(M)$, setting $e'=dh'$ in \eqref{eqn::T_lambda in a weak sense} we get $(\varepsilon T_\lambda u|dh')_{L^2\Omega^1(M)}=0$ and hence $T_\lambda u\in H_{d,0}\Omega^1(M)_{0,\varepsilon}$.\smallskip

To prove that $T_\lambda$ is self-adjoint, suppose $\varphi,\varphi'\in L^2\Omega^1(M)$. Then
\begin{align*}
(T_\lambda \varphi|\varphi')_{L^2\Omega^1(M)}&=(T_\lambda \varphi|\delta(\mu^{-1}dT_\lambda\varphi')+\lambda\varepsilon T_\lambda \varphi')_{L^2\Omega^1(M)}\\
&=(\mu^{-1}dT_\lambda \varphi|dT_\lambda\varphi')_{L^2\Omega^2(M)}+(\lambda\varepsilon T_\lambda \varphi|T_\lambda \varphi')_{L^2\Omega^1(M)}\\
&=(\delta(\mu^{-1}dT_\lambda\varphi)+\lambda\varepsilon T_\lambda \varphi|T_\lambda \varphi')_{L^2\Omega^1(M)}\\
&=(\varphi|T_\lambda \varphi')_{L^2\Omega^1(M)}.
\end{align*}
Thus, $T_\lambda^*=T_\lambda$.
\end{proof}

Then $E_0\in H_{d,0}\Omega^1(M)_{0,\varepsilon}$ solves \eqref{eqn::second order equation -- rewritten} if and only if
\begin{equation}\label{eqn::second order equation -- rewritten in terms of T_lambda operators}
E_0-(\omega^2+\lambda)\widetilde T_{\lambda}E_0=T_\lambda\left(\varepsilon J_{e,0}+\delta(\mu^{-1}*J_m)\right)
\end{equation}
where $\widetilde T_{\lambda}=T_\lambda \circ m_\varepsilon\circ P_\varepsilon$, $m_\varepsilon$ is multiplication by $\varepsilon$, and $P_\varepsilon$ is the bounded orthogonal projection of $L^2\Omega^1(M)$ onto $L^2\Omega^1(M)_{0,\varepsilon}$ constructed in Proposition~\ref{prop::Helmholtz decomposition of H_{d,0}}. Note that for all $h'\in H^1_0(M)$ we have
$$
\<\varepsilon J_{e,0}+\delta(\mu^{-1}*J_m),dh'\>_M=(\varepsilon J_{e,0}|dh')_{L^2\Omega^1(M)}+(\mu^{-1}*J_m|d(dh'))_{L^2\Omega^2(M)}=0,
$$
since $J_{e,0}\in L^2\Omega^1(M)_{0,\varepsilon}$. Therefore, by the second part of Proposition~\ref{prop::resonances}, this implies that $T_\lambda\left(\varepsilon J_{e,0}+\delta(\mu^{-1}*J_m)\right)\in H_{d,0}\Omega^1(M)_{0,\varepsilon}$.\smallskip

Second part of Proposition~\ref{prop::resonances} implies also that $\widetilde T_{\lambda}$ can be considered as a bounded linear operator
$$
\widetilde T_{\lambda}:L^2\Omega^1(M)_{0,\varepsilon}\overset{m_\varepsilon}{\longrightarrow} L^2\Omega^1(M)_{0,1}\overset{T_\lambda}{\longrightarrow} H_{d,0}\Omega^1(M)_{0,\varepsilon}\inclusion L^2\Omega^1(M)\overset{P_\varepsilon}{\longrightarrow} L^2\Omega^1(M)_{0,\varepsilon}
$$
and
\begin{equation}\label{eqn::tildeT_lambda maps H_d,0,o,varepsilon into itself}
\widetilde T_{\lambda}:L^2\Omega^1(M)_{0,\varepsilon}\overset{m_\varepsilon}{\longrightarrow} L^2\Omega^1(M)_{0,1}\overset{T_\lambda}{\longrightarrow} H_{d,0}\Omega^1(M)_{0,\varepsilon}.
\end{equation}

The equation \eqref{eqn::second order equation -- rewritten in terms of T_lambda operators} has a unique solution $E_0$ if and only if either $\omega^2=-\lambda$ or $(\omega^2+\lambda)^{-1}\notin\Spec(\widetilde T_{\lambda})$. By Proposition~\ref{prop::compact embedding of H_{d,0,varepsilon} into L^2}, the inclusion $H_{d,0}\Omega^1(M)_{0,\varepsilon}\inclusion L^2\Omega^1(M)$ is compact. This implies that $\widetilde T_{\lambda}$ is compact as an operator from $L^2\Omega^1(M)_{0,\varepsilon}$ to itself. According to Fredholm's alternative (see e.g. \cite[Theorem~0.38]{Folland1995}), this implies that $0\notin \Spec(\widetilde T_{\lambda})$ and $\Spec(\widetilde T_{\lambda})$ is discrete. Therefore, \eqref{eqn::second order equation -- rewritten in terms of T_lambda operators} has a unique solution $E_0$ for any $\omega\notin \Sigma$, where
$$
\Sigma=\{\omega\in \C\setminus\{\pm i\lambda^{1/2}\}:(\omega^2+\lambda)^{-1}\in \Spec(\widetilde T_{\lambda})\}
$$
which is discrete. Since $\id-(\omega^2+\lambda)\widetilde T_{\lambda}:H_{d,0}\Omega^1(M)_{0,\varepsilon}\to H_{d,0}\Omega^1(M)_{0,\varepsilon}$, for all $\omega\notin \Sigma$ we have $(\id-(\lambda+\omega^2)\widetilde T_{\lambda})^{-1}:H_{d,0}\Omega^1(M)_{0,\varepsilon}\to H_{d,0}\Omega^1(M)_{0,\varepsilon}$. Since the right hand-side of \eqref{eqn::second order equation -- rewritten in terms of T_lambda operators} is in $H_{d,0}\Omega^1(M)_{0,\varepsilon}$, this implies that the solution $E_0$ belongs to $H_{d,0}\Omega^1(M)_{0,\varepsilon}$ and
$$
\|E_0\|_{H_d\Omega^1(M)}\le C(\|J_e\|_{L^2\Omega^1(M)}+\|J_m\|_{L^2\Omega^1(M)}),
$$
since $\|\delta(\mu^{-1}*J_m)\|_{(H_d\Omega^1(M))'}\le C\|J_m\|_{L^2(M)}$.\smallskip

Finally, setting $E=E_0-\omega^{-2}dj_e$, we obtain a unique $H_{d,0}\Omega^1(M)$ solution for \eqref{eqn::second order equation} such that
$$
\|E\|_{H_{d}\Omega^1(M)}\le C(\|J_e\|_{L^2\Omega^1(M)}+\|J_m\|_{L^2\Omega^1(M)}),
$$
since $\|j_e\|_{H^1(M)}\le C\|J_e\|_{L^2\Omega^1(M)}$ by \eqref{eqn::estimate for j_e in H^1 norm}. The proof of Theorem~\ref{thm::well posedness new version} is thus complete.

\section{Proof of Theorem~\ref{thm::well posedness new version homogeneous}}\label{section::proof of thm1}
For a fixed $\omega\in \C$, 
consider the following space
$$
\mathcal M_{\varepsilon,\mu,\omega}=\{(E,H)\in H_d\Omega^1(M)\times H_d\Omega^1(M):(E,H)\text{ is a solution of \eqref{eqn::Maxwell homogenous in appendix}}\}.
$$
The topology on this space is the subspace topology in $H_d\Omega^1(M)\times H_d\Omega^1(M)$. It is not difficult to check that $\mathcal M_{\varepsilon,\mu,\omega}$ is closed in $H_d\Omega^1(M)\times H_d\Omega^1(M)$.\smallskip

For a given $(E,H)\in\mathcal M_{\varepsilon,\mu,\omega}$ define $\mathbf{t}_E(E,H):=\mathbf{t}(E)\in TH_d\Omega^1(\p M)$. Since the inclusion $\mathcal M_{\varepsilon,\mu,\omega}\inclusion H_d\Omega^1(M)\times H_d\Omega^1(M)$ is bounded, it is clear that $\mathbf{t}_E:\mathcal M_{\varepsilon,\mu,\omega}\to TH_d\Omega^1(\p M)$ is bounded.\smallskip

We now prove the following proposition which clearly implies Theorem~\ref{thm::well posedness new version homogeneous}.
\begin{Proposition}
There is a discrete set $\Sigma\subset\C$ such that for all $\omega\notin \Sigma$ the operator $\mathbf{t}_E:\mathcal M_{\varepsilon,\mu,\omega}\to TH_d\Omega^1(\p M)$ is a homeomorphism.
\end{Proposition}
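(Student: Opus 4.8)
The plan is to deduce the claim from Theorem~\ref{thm::well posedness new version}. First I would recall the standard reduction of the inhomogeneous boundary value problem \eqref{eqn::Maxwell homogenous in appendix} with $\mathbf t(E)=f$ to the problem with zero tangential data but nonzero current sources. Given $f\in TH_d\Omega^1(\p M)$, by definition of $TH_d\Omega^1(\p M)$ there is a lift $E_f\in H_d\Omega^1(M)$ with $\mathbf t(E_f)=f$ and $\|E_f\|_{H_d\Omega^1(M)}\le 2\|f\|_{TH_d\Omega^1(\p M)}$, say. Writing $E=E_f+E'$ and $H=H'$, the pair $(E,H)$ solves \eqref{eqn::Maxwell homogenous in appendix} with $\mathbf t(E)=f$ if and only if $(E',H')\in H_{d,0}\Omega^1(M)\times H_d\Omega^1(M)$ solves the inhomogeneous system \eqref{eqn::Maxwell in appendix} with source terms
$$
J_m=-{*dE_f},\qquad J_e=i\omega\varepsilon E_f,
$$
both of which lie in $L^2\Omega^1(M)$ (here one uses $\varepsilon\in C^2(M)$ and $E_f\in H_d\Omega^1(M)$, so $dE_f\in L^2$) and satisfy $\|J_e\|_{L^2}+\|J_m\|_{L^2}\le C\|f\|_{TH_d\Omega^1(\p M)}$. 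Let $\Sigma$ be the discrete subset of $\C$ produced by Theorem~\ref{thm::well posedness new version}. For $\omega\notin\Sigma$ that theorem yields a unique $(E',H')$ with the corresponding norm bound; then $(E,H)=(E_f+E',H')$ is a solution of \eqref{eqn::Maxwell homogenous in appendix} with $\mathbf t(E)=f$, i.e.\ $(E,H)\in\mathcal M_{\varepsilon,\mu,\omega}$ and $\mathbf t_E(E,H)=f$. This shows $\mathbf t_E$ is surjective.

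Next I would establish injectivity of $\mathbf t_E$. Suppose $(E,H)\in\mathcal M_{\varepsilon,\mu,\omega}$ with $\mathbf t_E(E,H)=\mathbf t(E)=0$. Then $(E,H)\in H_{d,0}\Omega^1(M)\times H_d\Omega^1(M)$ solves the inhomogeneous system \eqref{eqn::Maxwell in appendix} with $J_e=J_m=0$; by the uniqueness part of Theorem~\ref{thm::well posedness new version} (applied with $\omega\notin\Sigma$) we get $E=0$, and then from the first equation of \eqref{eqn::Maxwell homogenous in appendix}, $i\omega\mu H={*dE}=0$, so $H=0$ since $\mu$ has positive real part and $\omega\neq 0$ (note $0\notin\Sigma$ may be arranged, or simply $\omega\notin\Sigma$ excludes the troublesome values; in any case the argument gives $H=0$). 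Hence $\mathbf t_E$ is injective. Combining with surjectivity, $\mathbf t_E:\mathcal M_{\varepsilon,\mu,\omega}\to TH_d\Omega^1(\p M)$ is a continuous bijection.

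It remains to show the inverse is continuous. For $\omega\notin\Sigma$, define $S:TH_d\Omega^1(\p M)\to\mathcal M_{\varepsilon,\mu,\omega}$ by $Sf=(E_f+E',H')$ with $(E',H')$ as above; the explicit construction together with the estimate from Theorem~\ref{thm::well posedness new version} and the lift bound $\|E_f\|_{H_d\Omega^1(M)}\le C\|f\|_{TH_d\Omega^1(\p M)}$ gives
$$
\|E_f+E'\|_{H_d\Omega^1(M)}+\|H'\|_{H_d\Omega^1(M)}\le C\|f\|_{TH_d\Omega^1(\p M)}.
$$
Since $\mathcal M_{\varepsilon,\mu,\omega}$ carries the subspace topology of $H_d\Omega^1(M)\times H_d\Omega^1(M)$, this says $S$ is bounded, and by uniqueness $S=\mathbf t_E^{-1}$. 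Therefore $\mathbf t_E$ is a homeomorphism for all $\omega\notin\Sigma$. (One could alternatively invoke the open mapping theorem, using that $\mathcal M_{\varepsilon,\mu,\omega}$ is closed in a Hilbert space, hence Banach, and that $\mathbf t_E$ is a continuous bijection between Banach spaces; but the direct construction of the bounded inverse via Theorem~\ref{thm::well posedness new version} is cleaner and also yields the norm estimate asserted in Theorem~\ref{thm::well posedness new version homogeneous}.) The main point requiring care is simply the bookkeeping in the reduction to the inhomogeneous problem and checking that all constants and the discrete set $\Sigma$ are inherited from Theorem~\ref{thm::well posedness new version}; there is no genuine analytic obstacle beyond what was already resolved there.
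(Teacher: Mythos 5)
Your proposal is correct and follows essentially the same route as the paper: lift $f$ to $E_f\in H_d\Omega^1(M)$, reduce to the inhomogeneous system of Theorem~\ref{thm::well posedness new version} to get surjectivity and the norm bound, and use its uniqueness part for injectivity, with continuity of the inverse obtained either by your explicit solution operator or (as in the paper) by the Open Mapping Theorem applied to the closed subspace $\mathcal M_{\varepsilon,\mu,\omega}$. The only quibble is a sign: with $E=E_f+E'$ the second equation forces $J_e=-i\omega\varepsilon E_f$ rather than $+i\omega\varepsilon E_f$, a harmless bookkeeping slip that does not affect the argument.
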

\begin{proof}
Let $\Sigma$ be as in Theorem~\ref{thm::well posedness new version} and let us take any $\omega\notin \Sigma$. If we show that the bounded operator $\mathbf{t}_E:\mathcal M_{\varepsilon,\mu,\omega}\to TH_d\Omega^1(\p M)$ is one-to-one and onto, the result follows from Open Mapping Theorem.\smallskip

First, we prove injectivity of $\mathbf{t}_E$. Suppose that $(E_1,H_1), (E_2,H_2)\in \mathcal M_{\varepsilon,\mu,\omega}$ satisfy $\mathbf{t}_E(E_1,H_1)=\mathbf{t}_E(E_2,H_2)$. Then $(E,H)\in \mathcal M_{\varepsilon,\mu}$ and $\mathbf{t}(E)=0$, where $E:=E_1-E_2$ and $H:=H_1-H_2$. Uniqueness part of Theorem~\ref{thm::well posedness new version} (with $J_e=J_m=0$) gives that $E=0$ and $H=0$.\smallskip

Now, we prove surjectivity of $\mathbf{t}_E$. For a given $f\in TH_d\Omega^1(\p M)$, by definition of $TH_d\Omega^1(\p M)$, there is $E'\in H_d\Omega^1(M)$ such that $\mathbf{t}(E')=f$. Applying Theorem~\ref{thm::well posedness new version} with $J_e=i\omega\varepsilon E'$ and $J_m=*dE'$, we obtain a unique $(E_0,H_0)\in H_{d,0}\Omega^1(M)\times H_d\Omega^1(M)$ solving
$$
\begin{cases}
* dE_0=i\omega\mu H_0+*dE',\\
* dH_0=-i\omega \varepsilon E_0+i\omega\varepsilon E'.
\end{cases}
$$
Then $(E,H)\in \mathcal M_{\varepsilon,\mu}$ with $\mathbf{t}_E(E,H)=\mathbf{t}(E)=f$, where $E:=E_0+E'$ and $H:=H_0$.~The proof is complete.
\end{proof}

\section{Proof of Theorem~\ref{thm::spectral theorem for Maxwell equation homogeneous}}\label{section::spectral problem}
For the proof, observe that the boundary value problem \eqref{eqn::Maxwell homogeneous} can be written as
$$
\delta(\mu^{-1}dE)-\omega^2\varepsilon E=0,\quad \mathbf t(E)=0.
$$
We first consider the case $\omega\neq 0$. Then the latter boundary value problem has a solution $E$ in $H_{d,0}\Omega^1(M)_{0,\varepsilon}$ if and only if
$$
E-(\omega^2+\lambda)\widetilde T_{\lambda}E=0,
$$
where $\widetilde T_{\lambda}$ is defined as in the proof of Theorem~\ref{thm::well posedness new version}. We first show that this operator is in fact a self-adjoint operator with respect to certain inner product.

\begin{Lemma}\label{lemma::T_lambda,varepsilon is self-adjoint for real epsilon and mu}
If both $\varepsilon$ and $\mu$ are strictly positive on $M$, then the restriction of $\widetilde T_{\lambda}$ onto $L^2\Omega^1(M)_{0,\varepsilon}$ is self-adjoint with respect to the inner product $(\cdot,\cdot)_{L^2_\varepsilon\Omega^1(M)}$.
\end{Lemma}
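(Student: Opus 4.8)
The plan is to reduce the claim to the self-adjointness of $T_\lambda$ on $L^2\Omega^1(M)$ already recorded in the last assertion of Proposition~\ref{prop::resonances}, combined with the elementary facts that multiplication by the real-valued function $\varepsilon$ is self-adjoint on $L^2\Omega^1(M)$, and that, since $\varepsilon$ is strictly positive, $(\cdot,\cdot)_{L^2_\varepsilon\Omega^1(M)}=(\varepsilon\,\cdot\,|\,\cdot\,)_{L^2\Omega^1(M)}$ is a genuine inner product on $L^2\Omega^1(M)_{0,\varepsilon}$ equivalent to the ambient one, so that the latter is a Hilbert space under this inner product. First I would observe that for $\varphi\in L^2\Omega^1(M)_{0,\varepsilon}$ one has $P_\varepsilon\varphi=\varphi$, so that the restricted operator acts simply by $\widetilde T_{\lambda}\varphi=T_\lambda(\varepsilon\varphi)$; moreover, as noted in the proof of Theorem~\ref{thm::well posedness new version}, $\widetilde T_{\lambda}$ maps $L^2\Omega^1(M)_{0,\varepsilon}$ into itself and is bounded there, so the statement makes sense.

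Then, for $\varphi,\varphi'\in L^2\Omega^1(M)_{0,\varepsilon}$, I would run the chain of equalities
\begin{align*}
(\widetilde T_{\lambda}\varphi,\varphi')_{L^2_\varepsilon\Omega^1(M)}
&=(\varepsilon\,T_\lambda(\varepsilon\varphi)\,|\,\varphi')_{L^2\Omega^1(M)}
=(T_\lambda(\varepsilon\varphi)\,|\,\varepsilon\varphi')_{L^2\Omega^1(M)}\\
&=(\varepsilon\varphi\,|\,T_\lambda(\varepsilon\varphi'))_{L^2\Omega^1(M)}
=(\varphi,\widetilde T_{\lambda}\varphi')_{L^2_\varepsilon\Omega^1(M)},
\end{align*}
where the second equality uses that $\overline{\varepsilon}=\varepsilon$, so that $m_\varepsilon$ is self-adjoint for $(\cdot\,|\,\cdot)_{L^2\Omega^1(M)}$, and the third equality uses $T_\lambda|_{L^2\Omega^1(M)}=T_\lambda^*$ from Proposition~\ref{prop::resonances}. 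Since $\widetilde T_{\lambda}$ is a bounded operator on the Hilbert space $\big(L^2\Omega^1(M)_{0,\varepsilon},(\cdot,\cdot)_{L^2_\varepsilon\Omega^1(M)}\big)$, this symmetry identity is precisely the assertion that $\widetilde T_{\lambda}$ is self-adjoint there.

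I do not anticipate any real obstacle: the only points requiring a little care are the bookkeeping of the projection $P_\varepsilon$ (used only to identify the restricted operator with $\varphi\mapsto T_\lambda(\varepsilon\varphi)$), the invariance of $L^2\Omega^1(M)_{0,\varepsilon}$ under $\widetilde T_{\lambda}$ already established earlier, and the conjugation conventions in the $L^2$ pairing; everything else is the two-line computation above, which rests entirely on the reality of $\varepsilon$ and the previously proven self-adjointness of $T_\lambda$.
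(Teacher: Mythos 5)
Your proposal is correct and follows essentially the same route as the paper: identify $\widetilde T_\lambda\varphi=T_\lambda(\varepsilon\varphi)$ on the subspace, move the real weight $\varepsilon$ across the $L^2$ pairing, and invoke the self-adjointness of $T_\lambda$ from Proposition~\ref{prop::resonances}. The only cosmetic difference is that you correctly attribute the middle equality to the reality of $\varepsilon$ (and note explicitly that boundedness upgrades symmetry to self-adjointness), whereas the paper loosely labels that step ``integration by parts.''
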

\begin{proof}
For $\varphi,\varphi'\in L^2\Omega^1(M)_{0,\varepsilon}$ we have $\widetilde T_{\lambda}\varphi=T_{\lambda}(\varepsilon \varphi)$ and $\widetilde T_{\lambda}\varphi'=T_{\lambda}(\varepsilon \varphi')$. Therefore, using integration by parts,
$$
(\widetilde T_{\lambda}\varphi,\varphi')_{L^2_{\varepsilon}\Omega^1(M)}=(\varepsilon T_\lambda(\varepsilon\varphi)|\varphi')_{L^2\Omega^1(M)}=(T_\lambda(\varepsilon\varphi)|\varepsilon\varphi')_{L^2\Omega^1(M)}.
$$
According to the hypotheses and Proposition~\ref{prop::resonances}, $T_\lambda$ is self-adjoint with respect to the $L^2\Omega^1(M)$-inner product. Therefore,
$$
(\widetilde T_{\lambda}\varphi,\varphi')_{L^2_\varepsilon\Omega^1(M)}=(\varepsilon\varphi|T_\lambda(\varepsilon\varphi'))_{L^2\Omega^1(M)}=(\varphi,\widetilde T_\lambda\varphi')_{L^2_\varepsilon\Omega^1(M)}.
$$
This finishes the proof.
\end{proof}

It was shown in the previous section that the operator $\widetilde T_{\lambda}$ is bounded and compact from $L^2\Omega^1(M)_{0,\varepsilon}$ to itself. Moreover, by Lemma~\ref{lemma::T_lambda,varepsilon is self-adjoint for real epsilon and mu}, the assumptions that $\varepsilon$ and $\mu$ are strictly positive imply that the operator $\widetilde T_{\lambda}$ is self-adjoint with respect to the inner product $(\cdot,\cdot)_{L^2_\varepsilon\Omega^1(M)}$. Then by Fredholm's alternative and Spectral theorem (see e.g. Proposition~6.6 in \cite[Appendix~A]{Taylor1999}) there is a sequence $\{\kappa_k\}_{k=1}^\infty\subset\R$ consisting of eigenvalues of finite multiplicity such that $\kappa_k\searrow 0$ as $k\to\infty$. Associated to the eigenvalues $\kappa_k$ we have the eigenfunctions $e_k\in L^2\Omega^1(M)_{0,\varepsilon}$, forming an orthonormal basis in $L^2\Omega^1(M)_{0,\varepsilon}$ with respect to the inner product $(\cdot,\cdot)_{L^2_\varepsilon\Omega^1(M)}$ and satisfying $\widetilde T_{\lambda}e_k=\kappa_k e_k$. Moreover, each $e_k$ is in $H_{d,0}\Omega^1(M)_{0,\varepsilon}$ (by \eqref{eqn::tildeT_lambda maps H_d,0,o,varepsilon into itself}) and solves
$$
e_k-(\omega_k^2+\lambda)\widetilde T_{\lambda}e_k=0
$$
if $\omega_k^2=\kappa_k^{-1}-\lambda$. Then $e_k$ also solves $\delta(\mu^{-1}de_k)-\omega_k^2\varepsilon e_k=0$. Using this and integrating by parts we show that
$$
(\mu^{-1}de_k|de_k)_{L^2\Omega^1(M)}-\omega_k^2(\varepsilon e_k|e_k)_{L^2\Omega^1(M)}=(\delta(\mu^{-1}de_k)-\omega_k^2\varepsilon e_k|e_k)_{L^2\Omega^1(M)}=0.
$$
Since $\varepsilon$ and $\mu$ are assumed to be strictly positive, this implies that
$$
\omega_k^2=\frac{(\mu^{-1}de_k|de_k)_{L^2\Omega^2(M)}}{(\varepsilon e_k|e_k)_{L^2\Omega^1(M)}}>0.
$$
We may choose $\omega_k>0$, and hence $\omega_k=(\kappa_k^{-1}-\lambda)^{1/2}$. Since $\kappa_k\searrow 0$ as $k\to\infty$, we~have $\omega_k\to \infty$ as $k\to\infty$.\smallskip

Next, we define the sequence $\{h_k\}_{k=1}^\infty\subset L^2\Omega^1(M)$ as $*de_k=i\omega_k\mu h_k$. Then, by direct calculations, it is not difficult to see that each $(e_k,h_k)$ satisfy \eqref{eqn::Maxwell homogenous for eigenvalues and eigenfunctions} and hence also $h_k\in H_d\Omega^1(M)$. Moreover, $h_k\in H_{d}\Omega^1(M)_\mu$, since for all $\varphi\in H_{d}(0,\Omega^1(M))$, integrating by parts, we have
\begin{align*}
(h_k,\varphi)_{L^2_\mu\Omega^1(M)}&=(\mu h_k|\varphi)_{L^2\Omega^1(M)}=(i\omega_k)^{-1}(*de_k|\varphi)_{L^2\Omega^1(M)}\\
&=(i\omega_k)^{-1}(de_k|*\varphi)_{L^2\Omega^2(M)}=(i\omega_k)^{-1}(e_k|*d\varphi)_{L^2\Omega^1(M)}=0.
\end{align*}
Further, using \eqref{eqn::Maxwell homogenous for eigenvalues and eigenfunctions} 
\begin{align*}
(h_k,h_l)_{L^2_\mu\Omega^1(M)}&=(\mu h_k|h_l)_{L^2\Omega^1(M)}=(\omega_k\omega_l)^{-1}(\mu^{-1} de_k|de_l)_{L^2\Omega^2(M)}\\
&=(\omega_k\omega_l)^{-1}(\delta(\mu^{-1} de_k)|e_l)_{L^2\Omega^1(M)}=\frac{\omega_k}{\omega_l}(\varepsilon e_k|e_l)_{L^2\Omega^1(M)}.
\end{align*}
Therefore,
$$
(h_k,h_l)_{L^2_\mu\Omega^1(M)}=\frac{\omega_k}{\omega_l}(e_k,e_l)_{L^2_\varepsilon\Omega^1(M)}=\delta_{kl},
$$
i.e. $\{h_k\}_{k=1}^\infty$ forms an orthonormal set with respect to $(\cdot,\cdot)_{L^2_\mu\Omega^1(M)}$.\smallskip

To show that $\{h_k\}_{k=1}^\infty$ is complete in $H_{d}\Omega^1(M)_\mu$, with respect to $(\cdot,\cdot)_{L^2_\mu\Omega^1(M)}$, take $\psi\in H_{d}\Omega^1(M)_\mu$ such that $(h_k,\psi)_{L^2_\mu\Omega^1(M)}=0$ for all $k\ge 1$ integer. Then
\begin{align*}
0&=i\omega_k(\mu h_k|\psi)_{L^2\Omega^1(M)}=(*de_k|\psi)_{L^2\Omega^1(M)}=(e_k|\delta *\psi)_{L^2\Omega^1(M)}=(e_k|*d\psi)_{L^2\Omega^1(M)}.
\end{align*}
Setting $\phi=\varepsilon^{-1}*d\psi\in L^2\Omega^1(M)$, this implies that $(e_k,\phi)_{L^2_\varepsilon\Omega^1(M)}=0$ for all $k\ge 1$ integer. Suppose that $\phi\in L^2\Omega^1(M)_{0,\varepsilon}$. Then by completeness of $\{e_k\}_{k=1}^\infty$ in $L^2\Omega^1(M)_{0,\varepsilon}$ with respect to the inner product $(\cdot,\cdot)_{L^2_\varepsilon\Omega^1(M)}$, we get $\phi=0$ and hence $\psi\in H_d(0,\Omega^1(M))$. Then $\psi=0$ according to the Helmholtz decomposition \eqref{eqn::decomposition of H_d}.\smallskip

Now, we show that $\phi\in L^2\Omega^1(M)_{0,\varepsilon}$. For this, we need to show that $(\varepsilon \phi|d\varphi)_{L^2\Omega^1(M)}=0$ for all $\varphi\in H_0^1(M)$. By density, it is enough to consider the case when $\varphi\in C^\infty_0\Omega^1(M^{\rm int})$. Then, integrating by parts,
$$
(\varepsilon \phi|d\varphi)_{L^2\Omega^1(M)}=(*d\psi|d\varphi)_{L^2\Omega^1(M)}=(d\psi|*d\varphi)_{L^2\Omega^2(M)}=(\mathbf t(\psi)|\mathbf t(i_\nu*d\varphi))_{\p M}.
$$
Since $\mathbf t(d\varphi)=d_{\p M}(\varphi|_{\p M})=0$, by Lemma~\ref{lemma::expression for boundary term of Stokes' theorem}, we obtain
$$
(\mathbf t(u)|\mathbf t(i_\nu*d\varphi))_{L^2\Omega^1(\p M)}=\int_{\p M}\mathbf t(u)\wedge \mathbf t(d\overline\varphi)=0
$$
for all $u\in C^\infty\Omega^1(M)$. Therefore, $\mathbf t(i_\nu*d\varphi)=0$ and hence $(\varepsilon \phi|d\varphi)_{L^2\Omega^1(M)}=0$. This proves the completeness.\smallskip

Finally, we mention that $\omega=0$ is also an eigenvalue of \eqref{eqn::Maxwell homogeneous} with infinite dimensional eigenspace $H_{d,0}(0,\Omega^1(M))\times H_{d}(0,\Omega^1(M))$.

\end{document}